\theoremstyle{plain}
\newtheorem{theorem}{Theorem}[section]
\theoremstyle{plain}
\newtheorem{lemma}[theorem]{Lemma}
\theoremstyle{plain}
\newtheorem{corollary}[theorem]{Corollary}
\theoremstyle{definition}
\newtheorem{definition}[theorem]{Definition}
\theoremstyle{plain}
\newtheorem{proposition}[theorem]{Proposition}
\theoremstyle{remark}
\theoremstyle{definition}
\newtheorem{example}[theorem]{Example}
\theoremstyle{plain}
\theoremstyle{plain}
\theoremstyle{plain}
\title{Coproducts of proximity spaces}
\author{Pawel Grzegrzolka}
\address{Stanford University, California, USA}
\email{pawelg@stanford.edu}
\keywords{proximity spaces, coproducts, Smirnov compactification, Stone-Cech compactification, minimal uniform compactification, metrizability, connectedness, dimension\\This is a pre-print of an article published in \textit{Afrika Matematika}. The final authenticated version is available online at: https://doi.org/10.1007/s13370-019-00757-1}
\begin{document}

\begin{abstract}
In this paper, we introduce coproducts of proximity spaces. After exploring several of their basic properties, we show that given a collection of proximity spaces, the coproduct of their Smirnov compactifications proximally and densely embeds in the Smirnov compactification of the coproduct of the original proximity spaces. We also show that the dense proximity embedding is a proximity isomorphism if and only if the index set is finite. After constructing a number of examples of coproducts and their Smirnov compactifications, we explore several properties of the Smirnov compactification of the coproduct, including its metrizability, connectedness of the boundary, dimension, and its relation to the Stone-Cech compactification. In particular, we show that the Smirnov compactification of the infinite coproduct is never metrizable and that its boundary is highly disconnected. We also show that the proximity dimension of the Smirnov compactification of the coproduct equals the supremum of the covering dimensions of the individual Smirnov compactifications and that the Smirnov compactification of the coproduct is homeomorphic to the Stone-Cech compactification if and only if each individual proximity space is equipped with the Stone-Cech proximity. We finish with an example of a coproduct with the covering dimension $0$ but the proximity dimension $\infty.$
\end{abstract}

\maketitle
\tableofcontents

\section{Introduction}
Given a metric space $(X,d)$, one can declare two subsets $A,B \subseteq X$ to be "close" if and only if $D(A,B)=0,$ where $D(A,B):=\inf\{d(a,b) \mid a\in A, b \in B\}.$ This relation exhibits certain interesting properties, such as 
\begin{itemize}
\item the closure of a subset $A\subseteq X$ is the collection of all points of $X$ that are close to $A,$
\item any function between metric spaces that sends close sets to close sets is continuous.
\end{itemize}
In fact, one can show that a function between two metric spaces is uniformly continuous if and only if the images of close sets are close. Proximity spaces (for the definition, see Section \ref{preliminaries}) were introduced by Efremovic (\cite{Efremovic1}, \cite{Efremovic2}) to axiomatize this intuitive notion of closeness in metric spaces. Since their introduction, many results regarding proximity spaces, their generalizations, and their relations to other structures such as topologies and uniformities have been presented. For an overview of many such results as well as an introduction to the theory of proximity spaces, the reader is reffered to \cite{powerful} and \cite{proximityspaces}. In particular, it is worth noting that proximities completely classify Haudorff compactifications of topological spaces and consequently serve as a powerful tool in studying compactifications (see Section \ref{preliminaries}). 

Despite the fact that the theory of proximities and their generalizations is an active area of research with applications to topology (\cite{set_open}, \cite{multiset},  \cite{hausdorff}, \cite{rough}, \cite{apartness}), analysis (\cite{Naimpally}, \cite{analysis1}), computer science and point-free geometry (\cite{point_free}, \cite{computer_science}, \cite{Peters}, \cite{computer}), boolean algebras (\cite{boolean}, \cite{Fedorchuk}), and large-scale geometry (\cite{paper3}, \cite{paper1}, \cite{paper2}, \cite{Honari}, \cite{Protasov}), to the best of author's knowledge the literature lacks the study of coproducts of proximity spaces. This paper is designed to develop the basic theory of coproducts of proximity spaces. After briefly discussing elementary and intuitive properties of such coproducts, we focus our attention on the properties of the unique Smirnov compactifications of such coproducts. In particular, we explore their metrizability, dimension, connectedness of the boundary, and their relation to other compactifications such as the Stone-Cech and the minimal uniform compactification.

In Section \ref{preliminaries}, we introduce the necessary background surrounding proximity spaces and their Smirnov compactifications. In Section \ref{coproduct_section}, we introduce coproducts of proximity spaces, and we explore several of their basic properties. In particular, we show that the coproduct of proximity spaces is indeed the coproduct in the category of proximity spaces. In section \ref{Smirnov_section}, we show that given a collection of proximity spaces, the coproduct of their Smirnov compactifications proximally and densely embeds in the Smirnov compactification of the coproduct of the original proximity spaces. We also show that the dense proximity embedding is a proximity isomorphism if and only if the index set is finite. To aid the understanding of coproducts and their Smirnov compactifications, in section \ref{examples_section} we compute several Smirnov compactifications of coproducts of different collections of proximity spaces. In particular, we give an example of a metrizable coproduct whose Smirnov compactification is strictly smaller than the Stone-Cech compactification, but strictly larger than the  minimal uniform compactification associated to the given metric. In section \ref{properties_of_the_coproduct} and subsequent subsections, we explore properties of the Smirnov compactifications of the coproduct. Among other results, we show that
\begin{itemize}
\item the Smirnov compactification of the coproduct is homeomorphic to the Stone-Cech compactification if and only if each individual proximity space is equipped with the Stone-Cech proximity (Theorem \ref{theorem2}),
\item the boundary of the Smirnov compactification of an infinite coproduct always contains the Stone-Cech corona of the naturals, and consequently the Smirnov compactification of an infinite coproduct is not metrizable (Theorem \ref{theorem123}),
\item the boundary of the Smirnov compactification of the coproduct is highly disconnected (Theorem \ref{connectedness_theorem})
\item the proximity dimension of the coproduct and its Smirnov compactification agrees with the covering dimension of the Smirnov compactification and the supremum of the covering dimensions of the individual Smirnov compactifications (Theorem \ref{last_theorem}).
\end{itemize}
Finally, we utilize coproducts to finish with an example of a proximity space with the covering dimension $0$ but the proximity dimension $\infty.$

\section{Preliminaries}\label{preliminaries}

In this section, we recall basic definitions and theorems regarding proximity spaces. The definitions and theorems in this section come from \cite{powerful} and \cite{proximityspaces}. A reader familiar with proximity spaces and Smirnov compactifications may want to skip this section and refer back to it when necessary. For more on proximity spaces, the reader is referred to \cite{powerful} and \cite{proximityspaces}.

\begin{definition}
Let $X$ be a set. A binary relation $\delta$ on the power set of $X$ is called a {\bf proximity} on $X$ if it satisfies the following axioms for all $A,B,C \subseteq X$:
\begin{enumerate}
	\item $A\delta B\implies B\delta A,$
	\item $A\delta B\implies A,B\neq\emptyset,$
	\item $A \cap B\neq\emptyset\implies A\delta B,$
	\item $A\delta (B\cup C)\iff A\delta B\text{ or }A\delta C,$
	\item $A {\centernot{\delta}}B\implies\exists E\subseteq X,\,A{\centernot{\delta}}E\text{ and }(X\setminus E){\centernot{\delta}}B,$
\end{enumerate}
where $A{\centernot{\delta}}B$ means that the statement "$A\delta B$" does not hold. If $A \delta B,$ then we say that $A$ is \textbf{close} to $B.$  A pair $(X,\delta)$ where $X$ is a set and $\delta$ is a proximity on $X$ is called a {\bf proximity space}.
\end{definition}

\begin{definition}
A proximity space $(X, \delta)$ is called \textbf{separated} if and only if
\[\{x\}\delta \{y\} \iff x=y\]
for all $x,y\in X.$
\end{definition}
Now let us introduce a few examples of proximity relations that we will use in this paper.

\begin{example}\label{subspace_proximity}
Let $(X, \delta)$ be a proximity space and let $Y\subseteq X.$ Then the relation $\delta_Y$ defined by
\[A\delta_Y B \iff A \delta B,\]
where $A$ and $B$ are subsets of $Y$, is a proximity relation on $Y,$ called the \textbf{subspace proximity} induced by $\delta.$
\end{example}

\begin{example}\label{discrete_proximity}
Let $X$ be a set. Then the relation $\delta$ defined by
\[A\delta B \iff A\cap B \neq \emptyset\]
is a separated proximity relation, called the \textbf{discrete proximity}
\end{example}

\begin{example}\label{metric_proximity}
Let $(X,d)$ be a metric space. Then the relation $\delta$ defined by
\[A \delta B \iff d(A,B)=0\]
is a separated proximity relation, called the \textbf{metric proximity} associated to the metric $d.$
\end{example}

\begin{example}\label{one_point_compactification}
Let $X$ be a locally compact Hausdorff space. Then the relation $\delta$ defined by
\[ A \delta B \iff A,B \text{ are not compact in } X \text{ or } \bar{A} \cap \bar{B} \neq \emptyset,\]
where $\bar{A}$ denotes the closure of $A$ in $X$, is a separated proximity relation, called the \textbf{Aleksandroff proximity}.
\end{example}

To understand the following example, recall that two subsets $A$ and $B$ of a topological space $X$ are called \textbf{functionally distinguishable} if and only if there exists a continuous function $f:X \to [0,1]$ such that $f(A)=0$ and $f(B)=1.$ If such a function does not exists, we say that $A$ and $B$ are \textbf{functionally indistinguishable}.

\begin{example}\label{Stone-Cech_proximity}
Let $X$ be a completely regular Hausdorff space. Then the relation $\delta$ defined by
\[A \delta B \iff A \text{ and } B \text{ are functionally indistinguishable}\]
is a separated proximity relation, called the \textbf{Stone-Cech proximity}.
\end{example}

\begin{example}\label{standard_proximity}
Let $X$ be a normal Hausdorff space. Then the relation $\delta$ defined by
\[A\delta B \iff \bar{A}\cap \bar{B} \neq \emptyset,\]
where $\bar{A}$ denotes the closure of $A$ in $X$, is a separated proximity relation, called the \textbf{standard proximity}.
\end{example}

Notice that the Stone-Cech proximity and the standard proximity coincide when the base space is normal and Hausdorff.

The category of proximity spaces consists of objects being proximity spaces and morphisms being proximity maps, as in the following definition.

\begin{definition}
Let $(X, \delta_X)$ and $(Y, \delta_Y)$ be two proximity spaces. Then a function $f:X \to Y$ is called a \textbf{proximity map} if and only if
\[A \delta_X B \implies f(A) \delta_Y f(B).\]
A bijective proximity map whose inverse is also a proximity map is called a \textbf{proximity isomorphism}. A proximity isomorphism onto a subspace of a proximity space in called a \textbf{proximity embedding}.
\end{definition}

Any proximity relation $\delta$ on $X$ induces a topology on $X$ defined by
\[A \text{ closed } \iff (A \delta x \implies x \in A).\]
That induced topology has many interesting properties. In particular, it is well-known that:
\begin{itemize}
\item the induced topology is always completely regular,
\item the induced topology is Hausdorff if and only if the proximity is separated,
\item all proximity maps are continuous in the induced topologies. Consequently, proximity isomorphisms are homeomorphisms and proximity embeddings are homeomorphic embeddings,
\item two different proximities can induce the same topology,
\item when the topology on $X$ is compact and Hausdorff, there is only one proximity inducing that topology, namely the standard proximity described in Example \ref{standard_proximity}.
\end{itemize}

It turns out that proximities serve as a powerful tool in studying Hausdorff compactifications (where by a \textbf{compactification} of a topological space $X$ we mean a compact space $\bar{X}$ such that $X$ densely embedds in $\bar{X}.$ In that case, we usually consider $X$ as a subspace of $\bar{X}$). In particular, every separated proximity space with its induced topology is a dense subspace of a unique (up to proximity isomorphism) compact and Hausdorff space, called the Smirnov compactification. In fact, given a completely regular Hausdorff space $X$, compatible proximities (i.e., proximities whose induced topology agrees with the original topology on $X$) are in a bijective correspondence with Hausdorff compactifications of $X.$ Let us recall the construction of the Smirnov compactification, as it will be useful to us in this paper.

\begin{definition}
	A {\bf cluster} in a separated proximity space $(X,\delta)$ is a nonempty collection $\sigma$ of subsets of $X$ satisfying the following:
	\begin{enumerate}
		\item For all $A,B\in\sigma$, $A\delta B,$
		\item If $C\delta A$ for all $A\in\sigma,$ then $C\in\sigma,$
		\item If $(A\cup B)\in\sigma,$ then either $A\in\sigma$ or $B\in\sigma.$
	\end{enumerate}
A cluster $\sigma$ is called a {\bf point cluster} if $\{x\}\in\sigma$ for some $x\in X.$
\end{definition}
Clusters will become "points" in the Smirnov compactification. They have many natural properties (some of them similar to properties of ultrafilters), for example
\begin{enumerate}
\item $A \in \sigma$ and $A \subseteq B \implies B \in \sigma,$
\item $A \in \sigma$ or $X\setminus A \in \sigma,$
\item $\sigma_1 \subseteq \sigma_2 \implies \sigma_1=\sigma_2.$
\end{enumerate}

The following two propositions show the relationship between clusters in a proximity space and clusters in any of its subspaces.

\begin{proposition}\label{generating_cluster}
If $(Y, \delta_Y)$ is a subspace of a proximity space $(X, \delta_X),$ then every cluster $\sigma_Y$ in $Y$ is a subset of a unique cluster $\sigma_X$ in $X$ defined by
\[\sigma_X:=\{A\subseteq X \mid A\delta_X B \text{ for all }B \in \sigma_Y\}.\]
In such a case, we say that $\sigma_Y$ \textbf{generates} $\sigma_X.$
\end{proposition}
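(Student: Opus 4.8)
The plan is to verify directly that the proposed collection $\sigma_X := \{A \subseteq X \mid A \,\delta_X\, B \text{ for all } B \in \sigma_Y\}$ is a cluster in $X$ containing $\sigma_Y$, and then to show uniqueness. First I would check that $\sigma_Y \subseteq \sigma_X$: if $A \in \sigma_Y$, then for every $B \in \sigma_Y$ we have $A \,\delta_Y\, B$ by cluster axiom (1) in $Y$, and since $\delta_Y$ is the subspace proximity this is literally $A \,\delta_X\, B$ (both $A$ and $B$ being subsets of $Y$); hence $A \in \sigma_X$. In particular $\sigma_X \neq \emptyset$.

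Next I would verify the three cluster axioms for $\sigma_X$. For axiom (1), given $A_1, A_2 \in \sigma_X$, I need $A_1 \,\delta_X\, A_2$; the natural approach is to use axiom (5) of the proximity (the strong/normality axiom): if $A_1 \,\centernot{\delta_X}\, A_2$, pick $E \subseteq X$ with $A_1 \,\centernot{\delta_X}\, E$ and $(X \setminus E) \,\centernot{\delta_X}\, A_2$. Then $E \cap Y$ and $(X\setminus E)\cap Y$ partition $Y$, so by cluster axiom (3) applied in $Y$ one of them lies in $\sigma_Y$; whichever it is contradicts $A_1 \in \sigma_X$ or $A_2 \in \sigma_X$ respectively (using the subset/monotonicity property of clusters and the fact that $\delta_X$ restricted to $Y$ is $\delta_Y$). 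For axiom (2), if $C \,\delta_X\, A$ for all $A \in \sigma_X$, then in particular $C \,\delta_X\, B$ for all $B \in \sigma_Y$ since $\sigma_Y \subseteq \sigma_X$, so $C \in \sigma_X$ by definition. For axiom (3), suppose $A_1 \cup A_2 \in \sigma_X$ but $A_1 \notin \sigma_X$ and $A_2 \notin \sigma_X$; then there are $B_1, B_2 \in \sigma_Y$ with $A_1 \,\centernot{\delta_X}\, B_1$ and $A_2 \,\centernot{\delta_X}\, B_2$. Since $B_1, B_2 \in \sigma_Y$ we have $B_1 \,\delta_Y\, B_2$, so $B_1 \cap B_2 \neq \emptyset$ is not guaranteed, but I can instead work with the set $B := $ a common element: actually the cleaner route is to observe $B_1 \cap B_2$ need not be close, so instead use that $\sigma_Y$, being a cluster, together with $B_1,B_2$ forces some single set — more precisely, use axiom (4) of $\delta_X$: from $A_1 \,\centernot{\delta_X}\, B_1$ and $A_2 \,\centernot{\delta_X}\, B_2$ one gets $(A_1\cup A_2) \,\centernot{\delta_X}\, (B_1 \cap B_2)$ fails to follow directly, so the right move is to note $B_1, B_2 \in \sigma_Y$ implies by the ultrafilter-like property that we may pass to $B_1 \cap B_2$ only if it is in $\sigma_Y$; lacking that, I would instead argue $(A_1 \cup A_2) \,\centernot{\delta_X}\, B_1$ would need $A_2 \,\centernot{\delta_X}\, B_1$ too — which we don't have. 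Hence I expect the correct argument to run: pick $E_i$ witnessing $A_i \,\centernot{\delta_X}\, B_i$ via axiom (5), combine the $E_i$, and derive that a subset of $Y$ lying in $\sigma_Y$ is not close to $A_1 \cup A_2$, contradicting $A_1 \cup A_2 \in \sigma_X$.

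Finally, for uniqueness: suppose $\tau$ is any cluster in $X$ with $\sigma_Y \subseteq \tau$. Using the listed property that a cluster contained in another cluster must equal it, it suffices to show $\tau \subseteq \sigma_X$. Take $A \in \tau$; for any $B \in \sigma_Y \subseteq \tau$, cluster axiom (1) in $X$ gives $A \,\delta_X\, B$, so $A \in \sigma_X$ by definition of $\sigma_X$. Hence $\tau \subseteq \sigma_X$, and since both are clusters, $\tau = \sigma_X$; this also re-proves that $\sigma_X$ is the \emph{unique} cluster containing $\sigma_Y$ once we know it is a cluster at all.

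The main obstacle is the verification of cluster axiom (3) for $\sigma_X$ (that $A_1 \cup A_2 \in \sigma_X$ forces $A_1 \in \sigma_X$ or $A_2 \in \sigma_X$): it requires genuinely combining the normality axiom (5) of $\delta_X$ with axiom (3) for the cluster $\sigma_Y$ in $Y$, and care is needed because intersections of members of $\sigma_Y$ need not lie in $\sigma_Y$. Everything else — the inclusion $\sigma_Y \subseteq \sigma_X$, axioms (1)–(2), and uniqueness — is a short formal consequence of the definitions and the basic cluster properties listed in the excerpt.
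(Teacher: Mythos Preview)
The paper does not actually prove this proposition; it simply cites Corollary~5.18 of Naimpally--Warrack. Your direct verification is therefore a genuinely different (and more self-contained) route, and almost everything you wrote is correct: the inclusion $\sigma_Y\subseteq\sigma_X$, axiom~(1) via the normality axiom and axiom~(3) of $\sigma_Y$, axiom~(2) tautologically, and uniqueness via the property $\sigma_1\subseteq\sigma_2\Rightarrow\sigma_1=\sigma_2$ are all fine as stated.

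The only place your argument is incomplete is axiom~(3), which you yourself flag as the obstacle. Your instinct to ``pick $E_i$ witnessing $A_i\centernot\delta_X B_i$ via axiom~(5) and combine the $E_i$'' is exactly right, but the combination step needs one more idea, since clusters are not closed under intersection. Here is how to finish: from $A_i\centernot\delta_X B_i$ with $B_i\in\sigma_Y$, axiom~(5) gives $E_i\subseteq X$ with $A_i\centernot\delta_X E_i$ and $(X\setminus E_i)\centernot\delta_X B_i$. The latter forces $(X\setminus E_i)\cap Y\notin\sigma_Y$. Now write
\[
Y \;=\; \bigl(E_1\cap E_2\cap Y\bigr)\ \cup\ \bigl((X\setminus E_1)\cap Y\bigr)\ \cup\ \bigl((X\setminus E_2)\cap Y\bigr),
\]
and apply cluster axiom~(3) in $\sigma_Y$ (for a three-term union): since the last two pieces are \emph{not} in $\sigma_Y$, the first must be, i.e.\ $E_1\cap E_2\cap Y\in\sigma_Y$. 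But $A_1\centernot\delta_X(E_1\cap E_2)$ and $A_2\centernot\delta_X(E_1\cap E_2)$, hence $(A_1\cup A_2)\centernot\delta_X(E_1\cap E_2\cap Y)$, contradicting $A_1\cup A_2\in\sigma_X$. With this step your direct proof is complete.
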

\begin{proof}
See corollary 5.18 in \cite{proximityspaces}.
\end{proof}

\begin{proposition}\label{cluster_in_a_subspace}
If $(Y, \delta_Y)$ is a subspace of a proximity space $(X, \delta_X),$ and $\sigma_X$ is a cluster in $X$ such that $Y \in \sigma_X,$ then there exists a unique cluster $\sigma_Y$ in $Y$ contained in $\sigma_X,$ namely
\[\sigma_Y:=\{A \subseteq Y \mid A \in \sigma_X\}.\]
In fact, $\sigma_Y$ generates $\sigma_X.$
\begin{proof}
See Theorem 5.16 in \cite{proximityspaces}.
\end{proof}
\end{proposition}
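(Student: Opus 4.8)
The proposition bundles three claims: (i) the prescribed family $\sigma_Y=\{A\subseteq Y \mid A\in\sigma_X\}$ is genuinely a cluster in $(Y,\delta_Y)$; (ii) it is the unique cluster in $Y$ contained in $\sigma_X$; and (iii) it generates $\sigma_X$ in the sense of Proposition \ref{generating_cluster}. The plan is to establish (i) first and then read off (ii) and (iii) cheaply. Throughout I would use that, by Example \ref{subspace_proximity}, $\delta_X$ and $\delta_Y$ agree on subsets of $Y$, together with the elementary facts that $P\,\centernot{\delta}\,Q$ and $R\subseteq P$ imply $R\,\centernot{\delta}\,Q$ (contrapositive of proximity axiom (4)) and that no cluster is a proper subset of another.

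For (i): $\sigma_Y$ is nonempty since $Y\in\sigma_X$ and $Y\subseteq Y$, so $Y\in\sigma_Y$. Cluster axioms (1) and (3) for $\sigma_Y$ transfer immediately to $X$: if $A,B\in\sigma_Y$ then $A,B\in\sigma_X$, so $A\,\delta_X\,B$ by axiom (1) in $X$, i.e. $A\,\delta_Y\,B$; and if $A\cup B\in\sigma_Y$ then $A\cup B\in\sigma_X$, so axiom (3) in $X$ places $A$ or $B$ in $\sigma_X$, hence in $\sigma_Y$. The real work is cluster axiom (2). Assume $C\subseteq Y$ is $\delta_Y$-close to every member of $\sigma_Y$; I must show $C\in\sigma_X$ (then $C\in\sigma_Y$). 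By axiom (2) for the cluster $\sigma_X$ it suffices to prove $C\,\delta_X\,B$ for every $B\in\sigma_X$. Suppose not, so $C\,\centernot{\delta_X}\,B$ for some $B\in\sigma_X$; apply proximity axiom (5) to obtain $E\subseteq X$ with $C\,\centernot{\delta_X}\,E$ and $(X\setminus E)\,\centernot{\delta_X}\,B$. From $E\cap Y\subseteq E$ we get $C\,\centernot{\delta_X}\,(E\cap Y)$, and since $E\cap Y\subseteq Y$ this forces $E\cap Y\notin\sigma_X$ (otherwise $E\cap Y\in\sigma_Y$ and the hypothesis on $C$ would give $C\,\delta_X\,(E\cap Y)$). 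Writing $Y=(E\cap Y)\cup(Y\setminus E)\in\sigma_X$ and applying cluster axiom (3) in $X$ with $E\cap Y\notin\sigma_X$ yields $Y\setminus E\in\sigma_X$. But $Y\setminus E\subseteq X\setminus E$, so $(X\setminus E)\,\centernot{\delta_X}\,B$ gives $(Y\setminus E)\,\centernot{\delta_X}\,B$, contradicting cluster axiom (1) in $X$ applied to the two members $Y\setminus E$ and $B$ of $\sigma_X$. Hence $C\,\delta_X\,B$ for all $B\in\sigma_X$, so $C\in\sigma_X$, and $\sigma_Y$ is a cluster.

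The step I expect to be the obstacle is exactly this verification of cluster axiom (2): it is the only place where the strong proximity axiom (5) is needed, and one must choose the right auxiliary set, namely $Y\setminus E$, to expose the contradiction. The remaining parts are short. For (ii), if $\tau_Y$ is any cluster in $Y$ with $\tau_Y\subseteq\sigma_X$, then every member of $\tau_Y$ is a subset of $Y$ lying in $\sigma_X$, hence lies in $\sigma_Y$; thus $\tau_Y\subseteq\sigma_Y$, and since no cluster properly contains another, $\tau_Y=\sigma_Y$. For (iii), now that $\sigma_Y$ is known to be a cluster in $Y$, Proposition \ref{generating_cluster} says it generates the cluster $\sigma_X':=\{A\subseteq X \mid A\,\delta_X\,B\text{ for all }B\in\sigma_Y\}$ in $X$; but every $A\in\sigma_X$ is $\delta_X$-close to every $B\in\sigma_Y\subseteq\sigma_X$ by cluster axiom (1) in $X$, so $\sigma_X\subseteq\sigma_X'$, and since both are clusters, $\sigma_X=\sigma_X'$. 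Therefore $\sigma_Y$ generates $\sigma_X$, as claimed.
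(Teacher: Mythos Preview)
Your proof is correct. The paper does not supply an argument here; it simply cites Theorem~5.16 of \cite{proximityspaces}, so there is nothing in-text to compare against. Your self-contained treatment---verifying the three cluster axioms for $\sigma_Y$ (with proximity axiom~(5) doing the real work in the check of cluster axiom~(2) via the auxiliary set $Y\setminus E$), deducing uniqueness from the fact that clusters admit no proper inclusions, and obtaining the generation claim by combining Proposition~\ref{generating_cluster} with one more containment $\sigma_X\subseteq\sigma_X'$---is the standard route and is sound.
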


The set of all clusters in a separated proximity space $X$ is usually denoted by $\mathfrak{X}$. Given a set $\mathcal{A}\subseteq\mathfrak{X}$ and a subset $C\subseteq X,$ we say that $C$ {\bf absorbs} $\mathcal{A}$ if $C\in\sigma$ for every $\sigma\in\mathcal{A}$.

\begin{theorem}\label{theorem11}
	Let $(X,\delta)$ be a separated proximity space and $\mathfrak{X}$ the corresponding set of clusters. The relation $\delta^{*}$ on the power set of $\mathfrak{X}$ defined for all $ \mathcal{A}, \mathcal{B}\ \subseteq \mathfrak{X}$ by
	\[\mathcal{A}\delta^{*}\mathcal{B}\iff A\delta B\]
	for all sets $A,B\subseteq X$ that absorb $\mathcal{A}$ and $\mathcal{B}$, respectively, is a proximity on $\mathfrak{X}.$ In fact, $\delta^*$ induces a compact and Hausdorff topology on $\mathfrak{X}$ into which $X$ proximally embeds as a dense subspace (by mapping each point to its corresponding point cluster). Also, $\mathfrak{X}$ is a unique (up to proximity isomorphism) compact Hausdorff space into which $X$ proximally and densely embedds.
\end{theorem}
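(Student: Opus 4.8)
The plan is to carry out four tasks: (i) verify the five proximity axioms for $\delta^{*}$; (ii) identify the topology $\delta^{*}$ induces and show the point-cluster map $e\colon X\to\mathfrak{X}$, $x\mapsto\sigma_{x}$, is a proximity embedding; (iii) show $e(X)$ is dense and $(\mathfrak{X},\delta^{*})$ is compact and Hausdorff; (iv) deduce uniqueness. I will use throughout that every cluster contains $X$ and only nonempty sets, is closed under supersets, splits finite unions, contains $A$ or $X\setminus A$ for each $A$, and that $C\notin\sigma$ forces some $A\in\sigma$ with $C\centernot{\delta}A$ (the contrapositive of cluster axiom (2)). Axioms (1)--(3) for $\delta^{*}$ are immediate: symmetry is inherited; if $\mathcal{A}=\emptyset$ then $\emptyset$ absorbs $\mathcal{A}$ while $X$ absorbs every $\mathcal{B}$ and $\emptyset\centernot{\delta}X$; and if $\sigma\in\mathcal{A}\cap\mathcal{B}$ then any absorbing sets $A,B$ both lie in $\sigma$, so $A\,\delta\,B$. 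In axiom (4), the implication $\Leftarrow$ is trivial since a set absorbing $\mathcal{B}\cup\mathcal{C}$ absorbs $\mathcal{B}$.

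The first step needing an idea is the forward implication of axiom (4). Given witnesses $A_{1}\centernot{\delta}B_{1}$ of $\mathcal{A}\centernot{\delta^{*}}\mathcal{B}$ and $A_{2}\centernot{\delta}C_{1}$ of $\mathcal{A}\centernot{\delta^{*}}\mathcal{C}$, I would apply axiom (5) for $\delta$ to each, getting $E_{1},E_{2}$ with $A_{i}\centernot{\delta}E_{i}$ and with $(X\setminus E_{1})\centernot{\delta}B_{1}$, $(X\setminus E_{2})\centernot{\delta}C_{1}$. For every $\sigma\in\mathcal{A}$, neither $E_{1}$ nor $E_{2}$ lies in $\sigma$ (else $A_{i}\,\delta\,E_{i}$), so splitting $X=\big((X\setminus E_{1})\cap(X\setminus E_{2})\big)\cup E_{1}\cup E_{2}$ shows $A_{0}:=(X\setminus E_{1})\cap(X\setminus E_{2})$ absorbs $\mathcal{A}$; since $A_{0}\centernot{\delta}B_{1}$ and $A_{0}\centernot{\delta}C_{1}$ by the superset axiom, $A_{0}\centernot{\delta}(B_{1}\cup C_{1})$, and $B_{1}\cup C_{1}$ absorbs $\mathcal{B}\cup\mathcal{C}$. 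Axiom (5) for $\delta^{*}$ is then clean: from a witness $A\centernot{\delta}B$ of $\mathcal{A}\centernot{\delta^{*}}\mathcal{B}$, pick $E$ as in axiom (5) for $\delta$ and set $\mathcal{E}:=\{\sigma\in\mathfrak{X}\mid E\in\sigma\}$; then $A,E$ witness $\mathcal{A}\centernot{\delta^{*}}\mathcal{E}$, while $X\setminus E$ absorbs $\mathfrak{X}\setminus\mathcal{E}$ (each $\sigma$ omitting $E$ contains $X\setminus E$), so $X\setminus E,B$ witness $(\mathfrak{X}\setminus\mathcal{E})\centernot{\delta^{*}}\mathcal{B}$. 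Separatedness of $\delta^{*}$ is quick: if $\sigma\neq\tau$ then $\sigma\not\subseteq\tau$, so some $A\in\sigma\setminus\tau$, and the contrapositive of cluster axiom (2) gives $C\in\tau$ with $A\centernot{\delta}C$, hence $\{\sigma\}\centernot{\delta^{*}}\{\tau\}$; so the $\delta^{*}$-topology will be Hausdorff once compactness is established.

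For the topology, the key observation is that $C\subseteq X$ absorbs $e(A)=\{\sigma_{a}\mid a\in A\}$ iff $A\subseteq\bar{C}$ (because $C\in\sigma_{a}\iff a\,\delta\,C\iff a\in\bar{C}$), and more generally $C'$ absorbs $\{\sigma\in\mathfrak{X}\mid C\in\sigma\}$ iff $\bar{C}\subseteq\bar{C'}$. From the first fact, $e(A)\,\delta^{*}\,e(B)$ unwinds to $A\,\delta\,B$ (taking $C=A$, $D=B$ in one direction; using $A\,\delta\,B\iff\bar{A}\,\delta\,\bar{B}$ and the superset axiom in the other), so $e$ is a proximity embedding, hence a homeomorphic embedding. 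For density: if $D$ absorbs all of $e(X)$ then $\bar{D}=X$, so for any cluster $\sigma$ and any $C\in\sigma$ (a nonempty set absorbing $\{\sigma\}$) we get $C\,\delta\,\bar{D}$, hence $C\,\delta\,D$; thus $\{\sigma\}\,\delta^{*}\,e(X)$, i.e. $\sigma\in\overline{e(X)}$, so $e(X)$ is dense.

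The main obstacle is compactness, which I would obtain from the (standard) criterion that a separated proximity space is compact iff every cluster on it is a point cluster. Given a cluster $\Sigma$ on $(\mathfrak{X},\delta^{*})$, form $\sigma_{0}:=\{A\subseteq X\mid\mathcal{A}_{A}\in\Sigma\}$ with $\mathcal{A}_{A}:=\{\sigma\in\mathfrak{X}\mid A\in\sigma\}$. One checks $\sigma_{0}$ is a cluster on $X$: nonemptiness holds because $\mathcal{A}_{X}=\mathfrak{X}\in\Sigma$; cluster axiom (1) because $\mathcal{A}_{A},\mathcal{A}_{B}\in\Sigma$ give $\mathcal{A}_{A}\,\delta^{*}\,\mathcal{A}_{B}$ while $A,B$ absorb $\mathcal{A}_{A},\mathcal{A}_{B}$; cluster axiom (3) because $\mathcal{A}_{A\cup B}\subseteq\mathcal{A}_{A}\cup\mathcal{A}_{B}$, so the latter lies in $\Sigma$ and splits; and cluster axiom (2) using the ``$\bar{C}\subseteq\bar{C'}$'' description of absorption together with the remark that any $D$ absorbing a member $\mathcal{B}\in\Sigma$ has $\mathcal{B}\subseteq\mathcal{A}_{D}$, whence $\mathcal{A}_{D}\in\Sigma$ and $D\in\sigma_{0}$. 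Then $\{\sigma_{0}\}\in\Sigma$: for each $\mathcal{B}\in\Sigma$ and each $C\in\sigma_{0}$ we have $\mathcal{A}_{C}\in\Sigma$, hence $\mathcal{A}_{C}\,\delta^{*}\,\mathcal{B}$, and $C$ absorbs $\mathcal{A}_{C}$, so $\{\sigma_{0}\}\,\delta^{*}\,\mathcal{B}$; by cluster axiom (2) for $\Sigma$ this gives $\{\sigma_{0}\}\in\Sigma$. Thus $\Sigma$ is the point cluster at $\sigma_{0}$, so $(\mathfrak{X},\delta^{*})$ is compact, and with separatedness it is compact Hausdorff. For uniqueness, let $Y$ be any compact Hausdorff space with a dense proximity embedding of $X$ (so $Y$ carries its unique standard proximity, restricting to $\delta$). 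Sending $y\in Y$ to $\{A\subseteq X\mid y\in\overline{A}^{\,Y}\}$ --- a cluster on $X$ by Proposition \ref{cluster_in_a_subspace}, since $X$ lies in the point cluster of $y$ --- and, conversely, sending a cluster on $X$ to the point of $Y$ (unique by compactness) whose point cluster is the cluster it generates on $Y$ via Proposition \ref{generating_cluster}, gives mutually inverse maps $Y\leftrightarrow\mathfrak{X}$ restricting to $e$ on $X$; since both $Y$ and $\mathfrak{X}$ carry the unique proximity inducing their compact Hausdorff topology, this bijection is a proximity isomorphism over $X$. I expect the forward implication of axiom (4) and the cluster-axiom (2) verification for $\sigma_{0}$ to be the only steps demanding real care, the rest being routine manipulation of the cluster properties.
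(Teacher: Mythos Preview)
The paper does not actually prove this theorem: its entire proof is the line ``See Theorem 7.7 in \cite{proximityspaces}.'' So there is no in-paper argument to compare against. Your proposal is a correct, essentially complete version of the standard construction one finds in that reference: the treatment of axiom~(4) (using axiom~(5) of $\delta$ to manufacture a single set $A_{0}$ absorbing $\mathcal{A}$), the closure characterisations of absorption for $e(A)$ and for $\mathcal{A}_{C}$, and the compactness argument via the ``every cluster on $(\mathfrak{X},\delta^{*})$ is a point cluster'' criterion are all carried out correctly.

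The only place that deserves one more sentence is the uniqueness paragraph. Knowing that $Y$ and $\mathfrak{X}$ each carry a unique compatible proximity does not by itself make your bijection a proximity isomorphism; you still need one direction to be continuous (after which a continuous bijection of compact Hausdorff spaces is a homeomorphism, hence a proximity isomorphism). With your explicit description this is immediate: a set $C\subseteq X$ absorbs the image in $\mathfrak{X}$ of $P\subseteq Y$ exactly when $P\subseteq\overline{C}^{\,Y}$, so whenever $\overline{P}^{\,Y}\cap\overline{Q}^{\,Y}\neq\emptyset$ any absorbing sets $C,D$ for the images of $P,Q$ satisfy $\overline{C}^{\,Y}\cap\overline{D}^{\,Y}\neq\emptyset$, i.e.\ $C\,\delta\,D$. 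Thus $Y\to\mathfrak{X}$ is a proximity map, and the argument closes.
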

\begin{proof}
See Theorem 7.7 in \cite{proximityspaces}.
\end{proof}

The compactification described in Theorem \ref{theorem11} is called the {\bf Smirnov compactification} of the proximity space $(X,\delta)$.

Since compact Hausdorff spaces have a unique seperated proximity inducing that topology (namely, the standard proximity), the above theorem implies that given a topological space $X$ and its Hausdorff compactification $\bar{X},$ the proximity 
$\delta$ on $X$ whose Smirnov compactification is homeomorphic to $\bar{X}$ is given by
\[A\delta B \iff cl_{\bar{X}}(A) \cap cl_{\bar{X}}(B) \neq \emptyset.\]
Using this fact, one can verify that the Aleksandroff proximity induces the one-point compactification and the Stone-Cech proximity induces the Stone-Cech compactification. The compactification induced by the metric proximity is called the \textbf{minimal uniform compactification} and is studied thoroughly in \cite{woods}. 

To finish this section, let us describe the bijective correspondence between compactifications and compatible proximites. It is well known that given a topological space $X,$ all compactifications of $X$ can be partially ordered by
\[\bar{X}_1 \geq \bar{X}_2 \iff \text{ the identity map on } X \text{ extends to a continuous map from  } \bar{X}_1 \text{ to } \bar{X}_2.\]
Similarly, one can partially order proximity relations on $X$ by
\[\delta_1 \geq \delta_2 \iff (A\delta_1 B \implies A \delta_2 B).\]
In that case, we say that $\delta_1$ is \textbf{finer} than $\delta_2.$
\begin{theorem}
Let $X$ be a completely regular Hausdorff space. Then there exists a bijective correspondence between proximities compatible with the topology on $X$ and compactifications of $X$ given by
\[\delta_1 \geq \delta_2 \iff \bar{X}_1 \geq \bar{X}_2,\]
where $\bar{X}_i$ is the Smirnov compactification of $X$ associated to the proximity $\delta_i.$
\end{theorem}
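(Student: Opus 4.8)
The statement to prove is the bijective correspondence between compatible proximities on a completely regular Hausdorff space $X$ and Hausdorff compactifications of $X$, respecting the two partial orders. The plan is to build the correspondence in both directions and then verify it is order-preserving in the strong (iff) sense. In one direction, Theorem \ref{theorem11} already hands us a map: to a compatible proximity $\delta$ we assign its Smirnov compactification $\mathfrak{X}_\delta$; since proximally embedding implies topologically embedding and the topology is compatible, $\mathfrak{X}_\delta$ is genuinely a Hausdorff compactification of $X$. In the other direction, given a compactification $\bar X$, I would define $\delta_{\bar X}$ by $A\,\delta_{\bar X}\,B \iff cl_{\bar X}(A)\cap cl_{\bar X}(B)\neq\emptyset$; the remark in the excrept already notes this is the standard proximity of the compact Hausdorff space $\bar X$ restricted to the dense subspace $X$, so by Example \ref{subspace_proximity} it is a proximity, and one checks it is compatible with the topology of $X$ because $X\subseteq\bar X$ is a subspace and $cl_{\bar X}(A)\cap X = cl_X(A)$.

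**Showing the two assignments are mutually inverse.** First, starting from $\delta$, forming $\mathfrak{X}_\delta$, and then taking its standard proximity restricted to $X$: I need this to return $\delta$. This is exactly the statement that the proximal embedding $X\hookrightarrow\mathfrak{X}_\delta$ is a \emph{proximity} embedding, i.e. $A\,\delta\,B$ in $X$ iff $cl_{\mathfrak{X}_\delta}(A)\cap cl_{\mathfrak{X}_\delta}(B)\neq\emptyset$ — which is part of Theorem \ref{theorem11} together with the fact that $\mathfrak{X}_\delta$ carries the standard proximity as its unique separated proximity. Conversely, starting from a compactification $\bar X$ with $\delta_{\bar X}$ as above, the Smirnov compactification $\mathfrak{X}_{\delta_{\bar X}}$ is a compact Hausdorff space into which $X$ proximally and densely embeds; but $\bar X$ is also such a space (with its standard proximity, which restricts to $\delta_{\bar X}$), so the uniqueness clause of Theorem \ref{theorem11} gives a proximity isomorphism $\mathfrak{X}_{\delta_{\bar X}}\cong\bar X$ fixing $X$, i.e. the two compactifications are equivalent. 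Hence the assignments are inverse bijections once we pass to equivalence classes of compactifications.

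**The order equivalence.** It remains to prove $\delta_1\geq\delta_2 \iff \bar X_1\geq\bar X_2$, where $\bar X_i=\mathfrak{X}_{\delta_i}$. For the forward direction, assume $\delta_1\geq\delta_2$, i.e. $A\,\delta_1\,B\Rightarrow A\,\delta_2\,B$. I would produce the continuous extension $\bar X_1\to\bar X_2$ of $\mathrm{id}_X$ by working with clusters: a cluster $\sigma$ in $(X,\delta_2)$ is, under the hypothesis, a union of (equivalently, contained in a unique) cluster in $(X,\delta_1)$ — more precisely the map sending a $\delta_1$-cluster $\sigma$ to the $\delta_2$-cluster it generates via Proposition \ref{generating_cluster} is well-defined and continuous, and restricts to the identity on point clusters; density of $X$ plus compactness of $\bar X_1$ and Hausdorffness of $\bar X_2$ then force this to be \emph{the} continuous extension. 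For the reverse direction, suppose the extension $h:\bar X_1\to\bar X_2$ exists; then for $A,B\subseteq X$, $A\,\delta_1\,B$ means $cl_{\bar X_1}(A)\cap cl_{\bar X_1}(B)\neq\emptyset$, and applying the continuous $h$ (which maps these closures into $cl_{\bar X_2}(A)$ and $cl_{\bar X_2}(B)$ respectively, since $h$ fixes $X$) yields $cl_{\bar X_2}(A)\cap cl_{\bar X_2}(B)\neq\emptyset$, i.e. $A\,\delta_2\,B$. The main obstacle is the cluster-bookkeeping in the forward direction of the order equivalence: one must check carefully that the cluster map is continuous in the Smirnov topologies (using the description of $\delta^*$-closeness in terms of absorbing sets from Theorem \ref{theorem11}) and genuinely extends the identity, rather than merely being a set map. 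Everything else reduces to the uniqueness statement in Theorem \ref{theorem11} and routine verifications about subspace proximities and closures.
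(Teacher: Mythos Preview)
The paper does not actually prove this theorem; its entire proof is the citation ``See Theorem 7.11 in \cite{proximityspaces}.'' So there is no argument in the paper to compare against, and your sketch is a genuine proof where the paper offers none.

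Your outline is essentially correct and follows the standard route. The construction of the two maps and the verification that they are mutually inverse are fine, and your argument for the reverse implication $\bar X_1\geq \bar X_2\Rightarrow\delta_1\geq\delta_2$ is clean. One small correction: in the forward direction of the order equivalence you invoke Proposition~\ref{generating_cluster}, but that proposition concerns a cluster in a \emph{subspace} generating a cluster in the ambient space, not two proximities on the same underlying set. What you actually need is that a $\delta_1$-cluster $\sigma$ has all its members pairwise $\delta_2$-close (since $\delta_1\geq\delta_2$), hence is contained in a unique $\delta_2$-cluster; this is the ``every bunch extends to a unique cluster'' lemma (Theorem~5.8 and its surroundings in \cite{proximityspaces}), not Proposition~\ref{generating_cluster}. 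Alternatively, and more economically, you can bypass the explicit cluster map altogether: $\delta_1\geq\delta_2$ says precisely that $\mathrm{id}_X:(X,\delta_1)\to(X,\delta_2)$ is a proximity map, and it is a standard fact (also in \cite{proximityspaces}) that every proximity map between separated proximity spaces extends uniquely to a continuous map between their Smirnov compactifications. That immediately gives the extension $\bar X_1\to\bar X_2$ and avoids the bookkeeping you flag as the main obstacle.
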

\begin{proof}
See Theorem 7.11 in \cite{proximityspaces}.
\end{proof}

\section{Definition and basic properties}\label{coproduct_section}
In this and the following sections, the notation $(X_{\alpha})_{\alpha}$ denotes a collection of objects indexed by an arbitrary (finite or countable or uncountable) set $I.$ Elements of $I$ are usually denoted by the Greek letters $\alpha, \beta$ etc. Also, given a disjoint union of sets $\coprod_{\alpha}X_{\alpha}$ and $A \subseteq \coprod_{\alpha}X_{\alpha},$ we define
\[A_{\alpha}:=A\cap X_{\alpha}.\]

\begin{definition}\label{coproduct_definition}
Let $(X_{\alpha}, \delta_{\alpha})_{\alpha}$ be a collection of proximity spaces. Then the \textbf{coproduct} of $(X_{\alpha}, \delta_{\alpha})_{\alpha \in I}$ is the proximity space 
\[(\coprod_{\alpha} X_{\alpha}, \delta),\]
where 
$\coprod_{\alpha} X_{\alpha}$ is the disjoint union of sets $(X_{\alpha})_{\alpha},$ and $\delta$ is defined by
\[A\delta B \iff A_{\alpha} \delta_{\alpha} B_{\alpha} \text{ for some } \alpha.\]
\end{definition}

\begin{proposition}
Let $(X_{\alpha}, \delta_{\alpha})_{\alpha}$ be a collection of proximity spaces. Then $(\coprod_{\alpha} X_{\alpha}, \delta)$ is a proximity space. 
\end{proposition}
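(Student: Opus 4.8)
The plan is to verify each of the five proximity axioms directly from Definition \ref{coproduct_definition}, reducing every statement on $\coprod_\alpha X_\alpha$ to the corresponding statement on some fibre $X_\alpha$ via the identity $(A\cup B)_\alpha = A_\alpha\cup B_\alpha$, $(A\cap B)_\alpha = A_\alpha\cap B_\alpha$, and $(X\setminus E)_\alpha = X_\alpha\setminus E_\alpha$ (writing $X=\coprod_\alpha X_\alpha$). Axioms (1), (2) and (3) are immediate: symmetry of $\delta$ follows from symmetry of each $\delta_\alpha$ applied to the witnessing index; if $A\delta B$ then $A_\alpha\delta_\alpha B_\alpha$ for some $\alpha$, so $A_\alpha,B_\alpha\neq\emptyset$ by axiom (2) for $\delta_\alpha$, hence $A,B\neq\emptyset$; and if $A\cap B\neq\emptyset$ then since the $X_\alpha$ partition $X$ there is an $\alpha$ with $A_\alpha\cap B_\alpha\neq\emptyset$, giving $A_\alpha\delta_\alpha B_\alpha$ by axiom (3) for $\delta_\alpha$.

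For axiom (4), I would observe that $(B\cup C)_\alpha = B_\alpha\cup C_\alpha$, so $A\delta(B\cup C)$ iff $A_\alpha\,\delta_\alpha\,(B_\alpha\cup C_\alpha)$ for some $\alpha$, iff (by axiom (4) for $\delta_\alpha$) $A_\alpha\delta_\alpha B_\alpha$ or $A_\alpha\delta_\alpha C_\alpha$ for some $\alpha$, iff $A\delta B$ or $A\delta C$; the only mild care needed is that the existential quantifier over $\alpha$ distributes over the disjunction, which is routine.

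The one axiom that takes genuine thought is the strong axiom (5): given $A\centernot\delta B$, I must produce a single $E\subseteq X$ with $A\centernot\delta E$ and $(X\setminus E)\centernot\delta B$. The hypothesis $A\centernot\delta B$ means $A_\alpha\centernot{\delta_\alpha} B_\alpha$ for \emph{every} $\alpha$, so for each $\alpha$ axiom (5) for $\delta_\alpha$ yields some $E_\alpha\subseteq X_\alpha$ with $A_\alpha\centernot{\delta_\alpha}E_\alpha$ and $(X_\alpha\setminus E_\alpha)\centernot{\delta_\alpha}B_\alpha$. The natural move is to set $E:=\coprod_\alpha E_\alpha$ (i.e. the subset of $X$ whose $\alpha$-fibre is $E_\alpha$); then $E\cap X_\alpha = E_\alpha$ and $(X\setminus E)\cap X_\alpha = X_\alpha\setminus E_\alpha$, so $A\centernot\delta E$ and $(X\setminus E)\centernot\delta B$ follow by definition of $\delta$. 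I expect this to be the main (and only real) obstacle, and it is resolved precisely because the coproduct proximity is defined fibrewise, so the choices $E_\alpha$ made independently on each fibre glue to a global $E$ without interference. Note this uses the axiom of choice when $I$ is infinite, which is unproblematic.

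Assembling these five verifications completes the proof that $(\coprod_\alpha X_\alpha,\delta)$ is a proximity space; I would present axioms (1)–(4) tersely and give axiom (5) the gluing argument above in full.
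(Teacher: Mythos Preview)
Your proposal is correct and follows essentially the same approach as the paper: axioms (1)--(3) are dismissed as immediate, axiom (4) is verified via the identity $(A\cup B)_\alpha = A_\alpha\cup B_\alpha$ and the distributivity of the existential quantifier over the disjunction, and axiom (5) is handled by choosing witnesses $E_\alpha$ fibrewise and gluing them to $E=\bigcup_\alpha E_\alpha$. The only differences are cosmetic (you spell out (1)--(3) in slightly more detail and remark on the use of choice, which the paper omits).
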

\begin{proof}
Clearly $\delta$ satisfies axioms 1,2, and 3 of a proximity space. Notice that 
\begin{equation*}
\begin{split}
(A\cup B)\delta C & \iff ((A\cup B)_\alpha \delta_{\alpha} C_{\alpha} \text{ for some } \alpha \\
 &  \iff ((A_{\alpha}\cup B_{\alpha}) \delta_{\alpha} C_{\alpha} \text{ for some } \alpha \\
 & \iff A_{\alpha} \delta_{\alpha} C_{\alpha} \text{ or } B_{\alpha} \delta_{\alpha} C_{\alpha} \text{ for some } \alpha \\
 & \iff A \delta C \text{ or } B \delta C,
\end{split}
\end{equation*}
which shows axiom $4.$ Finally, to see axiom $5,$ assume that $A{\centernot{\delta}}B.$ This means that $A_{\alpha}{\centernot{\delta}}_{\alpha}B_{\alpha}$ for all $\alpha.$ Consequently, for each $\alpha$ there exists $E_{\alpha}$ such that 
\[A_{\alpha} {\centernot{\delta}}_{\alpha} E_{\alpha} \text{ and } (X_{\alpha}\setminus E_{\alpha}){\centernot{\delta}}_{\alpha}B_{\alpha}.\]
Thus, $E:= \bigcup_{\alpha} E_{\alpha}$ is the desired set in $\coprod_{\alpha} X_{\alpha}$ such that
\[A {\centernot{\delta}} E \text{ and } (\coprod_{\alpha}X_{\alpha} \setminus E){\centernot{{\delta}}}B. \]
\end{proof}

The following three propositions are immediate consequences of the definition of $\delta,$ and thus the proofs are left as easy exercises.

\begin{proposition}
Let $(X_{\alpha}, \delta_{\alpha})_{\alpha}$ be a collection of proximity spaces and let $(\coprod_{\alpha} X_{\alpha}, \delta)$ be the associated coproduct. Then for any $\alpha,$ we have that $\delta_{\alpha}$ is the subspace proximity on $X_{\alpha}$ induced by $\delta.$ \qed
\end{proposition}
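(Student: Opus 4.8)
The plan is to simply unwind both definitions and exploit the fact that a subset of $X_\alpha$, viewed inside the disjoint union, meets only the $\alpha$-th summand. Fix an index $\alpha$ and let $A, B \subseteq X_\alpha$. By Example \ref{subspace_proximity}, the subspace proximity $\delta_{X_\alpha}$ on $X_\alpha$ induced by $\delta$ is characterized by $A \,\delta_{X_\alpha}\, B \iff A \,\delta\, B$, where on the right-hand side $A$ and $B$ are regarded as subsets of $\coprod_\beta X_\beta$. Hence it suffices to show that $A \,\delta\, B \iff A \,\delta_\alpha\, B$ for all $A, B \subseteq X_\alpha$.

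Next I would use disjointness of the union $\coprod_\beta X_\beta$: for $A \subseteq X_\alpha$ we have $A_\beta = A \cap X_\beta = \emptyset$ whenever $\beta \neq \alpha$, while $A_\alpha = A$, and likewise for $B$. By Definition \ref{coproduct_definition}, $A \,\delta\, B$ holds if and only if $A_\beta \,\delta_\beta\, B_\beta$ for some $\beta$. For every $\beta \neq \alpha$ this is impossible, since $A_\beta = \emptyset$ and axiom $2$ of a proximity forbids $\emptyset \,\delta_\beta\, B_\beta$. Therefore the only index that can witness $A \,\delta\, B$ is $\beta = \alpha$, so $A \,\delta\, B \iff A_\alpha \,\delta_\alpha\, B_\alpha \iff A \,\delta_\alpha\, B$, which is exactly the assertion that $\delta_\alpha$ is the subspace proximity induced by $\delta$.

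There is no genuine obstacle here; the only points requiring a little care are the notational distinction between a set $A \subseteq X_\alpha$ and "the same" set viewed as an element of the power set of $\coprod_\beta X_\beta$, and the explicit appeal to axiom $2$ of a proximity to discard the empty components $A_\beta$ with $\beta \neq \alpha$.
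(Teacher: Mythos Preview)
Your proof is correct. The paper does not actually write out a proof of this proposition---it is listed among three ``immediate consequences of the definition of $\delta$'' whose proofs are ``left as easy exercises''---and your argument is precisely the straightforward unwinding of definitions that the paper has in mind.
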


\begin{proposition}
Let $(X_{\alpha}, \delta_{\alpha})_{\alpha}$ be a collection of proximity spaces and let $(\coprod_{\alpha} X_{\alpha}, \delta)$ be the associated coproduct. Then $\delta$ is separated if and only if $\delta_{\alpha}$ is separated for every $\alpha.$ \qed
\end{proposition}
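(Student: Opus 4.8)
The plan is to reduce the whole equivalence to the behaviour of singletons under $\delta$, making essential use of the preceding proposition, which tells us that each $\delta_{\alpha}$ is exactly the subspace proximity on $X_{\alpha}$ induced by $\delta$. First I would record the elementary observation that for a point $x$ lying in the summand $X_{\beta}$ of $\coprod_{\alpha} X_{\alpha}$, the restriction $\{x\}_{\alpha} = \{x\} \cap X_{\alpha}$ equals $\{x\}$ when $\alpha = \beta$ and is empty otherwise. Since axiom $2$ forbids the empty set from being close to anything, it follows directly from Definition \ref{coproduct_definition} that for points $x \in X_{\beta}$ and $y \in X_{\gamma}$ we have $\{x\}\delta\{y\}$ if and only if $\beta = \gamma$ and $\{x\}\delta_{\beta}\{y\}$.

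For the forward direction, assume $\delta$ is separated and fix an index $\alpha$ together with points $x, y \in X_{\alpha}$. Because $\delta_{\alpha}$ is the subspace proximity induced by $\delta$, we have $\{x\}\delta_{\alpha}\{y\} \iff \{x\}\delta\{y\} \iff x = y$, so $\delta_{\alpha}$ is separated.

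For the converse, assume every $\delta_{\alpha}$ is separated and take $x, y \in \coprod_{\alpha} X_{\alpha}$. If $\{x\}\delta\{y\}$, the observation above shows that $x$ and $y$ lie in a common summand $X_{\alpha}$ and that $\{x\}\delta_{\alpha}\{y\}$, whence $x = y$ by separatedness of $\delta_{\alpha}$. The reverse implication $x = y \implies \{x\}\delta\{y\}$ is immediate from axiom $3$, since then $\{x\}\cap\{y\}\neq\emptyset$. Hence $\delta$ is separated. There is no genuine obstacle in this argument; the only point requiring a moment's care is the case of points in distinct summands, which is disposed of once one notes that a restriction to the "wrong" summand is empty and that axiom $2$ rules out the empty set being proximal to anything. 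Everything else is a direct unwinding of the definitions.
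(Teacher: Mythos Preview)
Your proof is correct. The paper does not give a proof of this proposition at all---it states that it is ``an immediate consequence of the definition of $\delta$'' and leaves it as an easy exercise---and your direct verification via the behaviour of singletons under the coproduct proximity is exactly the routine argument the paper is gesturing at.
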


\begin{proposition}
Let $(X_{\alpha}, \delta_{\alpha})_{\alpha}$ be a collection of proximity spaces and let $(\coprod_{\alpha} X_{\alpha}, \delta)$ be the associated coproduct. Let $(Y, \delta_Y)$ be a proximity space, and for any $\alpha,$ let $f_{\alpha}:X_{\alpha} \to Y$ be a map. Then the map $f: (\coprod_{\alpha} X_{\alpha}, \delta) \to Y$ that agrees with $f_{\alpha}$ on $X_{\alpha}$ for all $\alpha$ is a proximity map if and only if $f_{\alpha}$ is a proximity map for all $\alpha.$ \qed
\end{proposition}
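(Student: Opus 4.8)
The plan is to verify the two implications separately, in each case reducing everything to the defining equivalence $A\delta B \iff A_\alpha \delta_\alpha B_\alpha$ for some $\alpha$, together with the fact (established in the preceding proposition) that each $\delta_\alpha$ is the subspace proximity induced by $\delta$.

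First I would show that if every $f_\alpha$ is a proximity map, then so is $f$. Take $A,B \subseteq \coprod_\alpha X_\alpha$ with $A\delta B$. By definition there is an index $\alpha$ with $A_\alpha \delta_\alpha B_\alpha$, so $f_\alpha(A_\alpha)\,\delta_Y\, f_\alpha(B_\alpha)$ since $f_\alpha$ is a proximity map. Because $f$ restricts to $f_\alpha$ on $X_\alpha$, we have $f_\alpha(A_\alpha) = f(A_\alpha) \subseteq f(A)$ and likewise $f_\alpha(B_\alpha)\subseteq f(B)$; applying axiom $4$ twice (the monotonicity of a proximity in each argument: $A'\delta B'$ whenever $A\delta B$, $A\subseteq A'$, and $B\subseteq B'$) yields $f(A)\,\delta_Y\, f(B)$. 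Hence $f$ is a proximity map.

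Conversely, I would assume $f$ is a proximity map and fix $\alpha$. Let $A,B\subseteq X_\alpha$ with $A\,\delta_\alpha\, B$. Viewing $A$ and $B$ as subsets of the coproduct, we have $A_\alpha = A$, $B_\alpha = B$, and $A_\beta = B_\beta = \emptyset$ for $\beta\neq\alpha$; since the empty set is not close to anything by axiom $2$, the defining equivalence gives $A\delta B$. As $f$ is a proximity map, $f(A)\,\delta_Y\, f(B)$, and since $A,B\subseteq X_\alpha$ we have $f(A) = f_\alpha(A)$ and $f(B) = f_\alpha(B)$. Thus $f_\alpha(A)\,\delta_Y\, f_\alpha(B)$, so $f_\alpha$ is a proximity map.

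There is essentially no genuine obstacle here: the only mildly non-formal ingredient is the monotonicity property of a proximity relation used in the first implication, which is an immediate consequence of axioms $1$ and $4$ and is used routinely throughout the theory; everything else is bookkeeping with the decomposition $A = \bigcup_\alpha A_\alpha$ and $f(A) = \bigcup_\alpha f_\alpha(A_\alpha)$.
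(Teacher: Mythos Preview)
Your proof is correct. The paper does not actually supply a proof of this proposition: it is one of three statements explicitly declared ``immediate consequences of the definition of $\delta$'' and left as an easy exercise, with only a \qed\ symbol. Your argument is precisely the routine verification the paper has in mind, and the one nontrivial ingredient you flag (monotonicity of a proximity in each argument, from axioms 1 and 4) is exactly what is needed.
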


The following proposition shows that the topology on the coproduct is the disjoint union topology.

\begin{proposition}
Let $(X_{\alpha}, \delta_{\alpha})_{\alpha}$ be a collection of proximity spaces and let $(\coprod_{\alpha} X_{\alpha}, \delta)$ be the associated coproduct. Then the topology on $\coprod_{\alpha} X_{\alpha}$ induced by $\delta$ coincides with the disjoint union topology on $\coprod_{\alpha} X_{\alpha}$ coming from individual topologies on each $X_{\alpha}.$
\end{proposition}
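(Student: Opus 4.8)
\emph{Proposed proof.} The plan is to compare the two topologies through their closure operators. Recall that the topology induced by a proximity $\delta$ on a set $Z$ declares a set closed precisely when it contains every point close to it; equivalently, its closure operator is $\mathrm{cl}_\delta(A)=\{z\in Z \mid \{z\}\,\delta\,A\}$. Hence it suffices to show that for every $A\subseteq\coprod_\alpha X_\alpha$ one has
\[
\mathrm{cl}_\delta(A)=\bigcup_\alpha \mathrm{cl}_{\delta_\alpha}(A_\alpha),
\]
since the right-hand side is exactly the closure of $A$ in the disjoint union topology built from the ($\delta_\alpha$-induced) topologies on the $X_\alpha$.

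First I would fix $x\in\coprod_\alpha X_\alpha$, say $x\in X_\alpha$, and unwind the definition of $\delta$ from Definition \ref{coproduct_definition}: $\{x\}\,\delta\,A$ holds if and only if $\{x\}_\beta\,\delta_\beta\,A_\beta$ for some $\beta$. The key observation is that $\{x\}_\beta=\{x\}\cap X_\beta$ equals $\{x\}$ when $\beta=\alpha$ and equals $\emptyset$ otherwise; since axiom (2) of a proximity forbids the empty set from being close to anything, this condition can hold only for $\beta=\alpha$. Therefore $\{x\}\,\delta\,A \iff \{x\}\,\delta_\alpha\,A_\alpha \iff x\in\mathrm{cl}_{\delta_\alpha}(A_\alpha)$. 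Letting $x$ range over all of $\coprod_\alpha X_\alpha$ yields $\mathrm{cl}_\delta(A)=\bigcup_\alpha \mathrm{cl}_{\delta_\alpha}(A_\alpha)$. Equivalently, one may phrase the whole argument as: a set $A$ is $\delta$-closed if and only if each $A_\alpha$ is $\delta_\alpha$-closed in $X_\alpha$, which is precisely the defining property of closed sets in the disjoint union topology.

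Since the two topologies thus have the same closed sets, they coincide. I do not expect a genuine obstacle here; the only point that deserves a moment's care is the appeal to axiom (2), which is what lets us discard every summand $\beta\neq\alpha$ when testing closeness of a singleton sitting in $X_\alpha$ — without it one could not localize the relation "$x$ is close to $A$" to the single summand containing $x$, and the identification with the disjoint union closure would fail.
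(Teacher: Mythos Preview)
Your argument is correct and is essentially the same as the paper's: both reduce to the observation that for $x\in X_\alpha$ one has $\{x\}\,\delta\,A \iff \{x\}\,\delta_\alpha\,A_\alpha$, and then conclude that the $\delta$-closed sets coincide with the disjoint-union closed sets. The only cosmetic difference is that you package this via the closure operator $\mathrm{cl}_\delta(A)=\bigcup_\alpha \mathrm{cl}_{\delta_\alpha}(A_\alpha)$, while the paper checks the two inclusions of closed-set families directly; your explicit invocation of axiom~(2) to discard the summands $\beta\neq\alpha$ is exactly the step the paper's line ``$x\in X_\alpha$ for some $\alpha$, and thus $x\,\delta_\alpha\,A_\alpha$'' is using implicitly.
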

\begin{proof}
Let $A\subseteq \coprod_{\alpha} X_{\alpha}$ be closed in the topology induced by $\delta.$ Let $\alpha$ be arbitrary and consider $A_{\alpha}.$ To see that $A_{\alpha}$ is closed in the topology induced by $\delta_{\alpha},$ let $x \in X_{\alpha}$ be such that $x\delta_{\alpha} A_{\alpha}.$ In particular, $x \delta A,$ which implies that $x \in A_{\alpha}.$ Thus, $A_{\alpha}$ is closed in $X_{\alpha}.$ Since $\alpha$ was arbitrary, $A_{\alpha}$ is closed in $X_{\alpha}$ for any $\alpha.$ Thus, $A$ is closed in the disjoint union topology.

Conversely, let $A\subseteq \coprod_{\alpha} X_{\alpha}$ be closed in the disjoint union topology. Let $x \in \coprod_{\alpha} X_{\alpha}$ be such that $x \delta A.$ In particular, $x \in X_{\alpha}$ for some $\alpha,$ and thus $x \delta_{\alpha} A_{\alpha}.$ Since $A_{\alpha}$ is closed in $X_{\alpha},$ this means that $x \in A_{\alpha},$ and consequently $x \in A,$ showing that $A$ is closed in the topology induced by $\delta.$
\end{proof}

The following proposition shows that the coproduct of proximity spaces is really the coproduct in the category of proximity spaces.

\begin{proposition}
Let $(X_{\alpha}, \delta_{\alpha})_{\alpha}$ be a collection of proximity spaces and for any $\alpha,$ let $i_{\alpha}: X_{\alpha} \to \coprod_{\alpha} X_{\alpha}$ be the canonical set injection. Then $(\coprod_{\alpha} X_{\alpha}, \delta)$ is the coproduct in the category of proximity spaces, i.e., if $(Y, \delta_Y)$ is a proximity space and for each $\alpha$ we have a proximity map $f_{\alpha}: X_{\alpha} \to Y$, then there exists a unique proximity map $h: \coprod_{\alpha} X_{\alpha} \to Y$ such that $h \circ i_{\alpha} =f_{\alpha}$ for any $\alpha.$ In other words, the following diagram commutes for any $\alpha$:
\begin{center}
\begin{tikzcd}[column sep=huge, row sep =huge]
 & Y\\
X_{\alpha} \arrow{r}{i_{\alpha}} \arrow{ru}{f_{\alpha}} & \coprod_{\alpha} X_{\alpha} \arrow[dashrightarrow]{u}{h}
\end{tikzcd}
\end{center}
\end{proposition}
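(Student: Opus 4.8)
The plan is to verify the universal property directly, leaning on the ``gluing'' proposition proved just above (that a map out of $\coprod_{\alpha} X_{\alpha}$ agreeing with $f_{\alpha}$ on each $X_{\alpha}$ is a proximity map precisely when each $f_{\alpha}$ is). First I would construct the candidate $h$: since $\coprod_{\alpha} X_{\alpha}$ is a disjoint union, every point $x$ lies in $X_{\alpha}$ for exactly one index $\alpha$, so putting $h(x) := f_{\alpha}(x)$ for $x \in X_{\alpha}$ defines a function $h : \coprod_{\alpha} X_{\alpha} \to Y$ which by construction agrees with $f_{\alpha}$ on $X_{\alpha}$, i.e.\ $h \circ i_{\alpha} = f_{\alpha}$ for every $\alpha$.

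Next I would check that this $h$ is a proximity map. This is immediate from the preceding proposition: $h$ agrees with the proximity map $f_{\alpha}$ on $X_{\alpha}$ for each $\alpha$, hence $h$ is a proximity map. If one prefers to see it unwound: if $A \delta B$ in the coproduct, then $A_{\alpha} \delta_{\alpha} B_{\alpha}$ for some $\alpha$; since $f_{\alpha}$ is a proximity map, $f_{\alpha}(A_{\alpha}) \delta_Y f_{\alpha}(B_{\alpha})$; and since $f_{\alpha}(A_{\alpha}) \subseteq h(A)$ and $f_{\alpha}(B_{\alpha}) \subseteq h(B)$, monotonicity of $\delta_Y$ under supersets (a consequence of axiom $4$) gives $h(A) \delta_Y h(B)$.

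Finally, uniqueness: suppose $h' : \coprod_{\alpha} X_{\alpha} \to Y$ is any proximity map with $h' \circ i_{\alpha} = f_{\alpha}$ for all $\alpha$. Given $x \in \coprod_{\alpha} X_{\alpha}$, take the unique $\alpha$ with $x \in X_{\alpha}$; then $h'(x) = h'(i_{\alpha}(x)) = f_{\alpha}(x) = h(x)$, so $h' = h$.

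I do not anticipate any real obstacle: the substantive content of the statement is already carried by the earlier gluing proposition, and what remains is the bookkeeping of the disjoint-union structure. The only points deserving a moment's care are that the underlying set map $h$ is genuinely well-defined (which uses disjointness of the $X_{\alpha}$) and that the word ``proximity'' in ``proximity map'' imposes nothing beyond what that proposition already packages — so the categorical coproduct structure reduces cleanly to the set-theoretic one plus the gluing lemma.
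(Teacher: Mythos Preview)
Your proof is correct and follows essentially the same route as the paper: define $h$ by gluing the $f_{\alpha}$, verify $h$ is a proximity map via the chain $A\delta B \Rightarrow A_{\alpha}\delta_{\alpha}B_{\alpha} \Rightarrow f_{\alpha}(A_{\alpha})\delta_Y f_{\alpha}(B_{\alpha}) \Rightarrow h(A)\delta_Y h(B)$, and observe uniqueness at the level of underlying functions. The only small omission is that the paper also records (in one clause) that each $i_{\alpha}$ is a proximity map, which is needed for $(\coprod_{\alpha}X_{\alpha},\delta)$ together with the $i_{\alpha}$ to form a cocone in the category; you may want to add this one-line remark.
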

\begin{proof}
Clearly each $i_{\alpha}$ is a proximity map. Define $h: \coprod_{\alpha} X_{\alpha} \to Y$ so that it agrees with $f_{\alpha}$ on $X_{\alpha}$ for all $\alpha.$ That is, given $x\in \coprod_{\alpha} X_{\alpha},$ we know that $x\in X_{\alpha}$ for a unique ${\alpha},$ and consequently we set $h(x)=f_{\alpha}(x).$ It is then clear that $h \circ i_{\alpha} =f_{\alpha}$ for any $\alpha,$ and that $h$ is the unique function satisfying that condition. Notice that $h$ is a proximity map. For if $A,B \subseteq \coprod_{\alpha} X_{\alpha}$ such that $A\delta B,$ then $A_{\alpha} \delta_{\alpha} B_{\alpha}$ for some $\alpha.$ Consequently, $f_{\alpha}(A_{\alpha})\delta_Y f_{\alpha}(B_{\alpha}).$ Since $f_{\alpha}(A_{\alpha})\subseteq h(A)$ and $f_{\alpha}(B_{\alpha})\subseteq h(B),$ we have that $h(A) \delta_Y h(B).$
\end{proof}

\section{Coproduct and the Smirnov compactification}\label{Smirnov_section}

Given a collection of proximity spaces, one can
\begin{enumerate}
\item take their Smirnov compactifications, and then form the coproduct of the resulting collection of compact proximity spaces,
\item form the coproduct of the original collection, and then take the Smirnov compactification of the resulting proximity space. 
\end{enumerate}
The following proposition shows that the space obtained in $1$ proximally and densely embedds in the space obtained in $2.$ Also, it shows that the two spaces agree if and only if the index set is finite, i.e., the construction of the Smirnov compactification commutes with the coproduct if and only if the index set is finite.

\begin{theorem}\label{theorem1}
Let $(X_{\alpha}, \delta_{\alpha})_{\alpha}$ be a collection of separated proximity spaces and let $(\coprod_{\alpha} X_{\alpha}, \delta)$ be the associated coproduct. For any $\alpha,$ let $(\mathfrak{X}_{\alpha}, \delta_{\alpha}^*)$ denote the Smirnov compactification of $(X_{\alpha}, \delta_{\alpha}).$ Let $(\overline{\coprod_{\alpha} X_{\alpha}}, 
\delta^*)$ denote the Smirnov compactification of the coproduct $(\coprod_{\alpha} X_{\alpha}, \delta).$ Then 
\[(\coprod_{\alpha} \mathfrak{X}_{\alpha}, \delta_1)\hookrightarrow (\overline{\coprod_{\alpha} X_{\alpha}}, \delta^*),\]
where $\delta_1$ is the proximity associated to the coproduct of the collection of proximity spaces $(\mathfrak{X}_{\alpha}, \delta_{\alpha}^*)_{\alpha},$ and $\hookrightarrow$ denotes a dense proximity embedding. What is more, the dense proximity embedding is a proximity isomorphism if and only if the index set is finite. In other words, the coproduct of Smirnov compactifications is the Smirnov compactification of the coproduct if and only if the index set is finite.
\end{theorem}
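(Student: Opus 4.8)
The plan is to build an explicit map $\iota\colon(\coprod_{\alpha}\mathfrak{X}_{\alpha},\delta_1)\to(\overline{\coprod_{\alpha}X_{\alpha}},\delta^{*})$, prove that it is a dense proximity embedding, and then decide exactly when it is onto. Since $(X_{\alpha},\delta_{\alpha})$ is a subspace of $(\coprod_{\alpha}X_{\alpha},\delta)$, Proposition \ref{generating_cluster} lets me send a cluster $\sigma_{\alpha}\in\mathfrak{X}_{\alpha}$ to the cluster $\iota(\sigma_{\alpha}):=\{C\subseteq\coprod_{\alpha}X_{\alpha}\mid C\,\delta\,D\text{ for all }D\in\sigma_{\alpha}\}$ it generates in the coproduct; as $\coprod_{\alpha}\mathfrak{X}_{\alpha}=\bigcup_{\alpha}\mathfrak{X}_{\alpha}$ as a set, this defines $\iota$. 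The one structural fact I will lean on throughout is that, by Definition \ref{coproduct_definition}, no nonempty subset of $X_{\alpha}$ is $\delta$-close to a nonempty subset of $X_{\beta}$ when $\alpha\neq\beta$; in particular $X_{\alpha}\in\iota(\sigma_{\alpha})$ for every $\sigma_{\alpha}$, while no single cluster of the coproduct can contain two different $X_{\alpha}$'s. This already yields injectivity of $\iota$: clusters living in different copies are distinguished by the $X_{\alpha}$ they contain, and two clusters in the same $\mathfrak{X}_{\alpha}$ that generate the same coproduct cluster must agree by the uniqueness clause of Proposition \ref{cluster_in_a_subspace}.

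Next I would check that $\iota$ is a proximity embedding, i.e.\ that $\mathcal{A}\,\delta_1\,\mathcal{B}\iff\iota(\mathcal{A})\,\delta^{*}\,\iota(\mathcal{B})$; unwinding the coproduct proximity $\delta_1$ and the Smirnov proximity $\delta^{*}$ of Theorem \ref{theorem11}, this splits into two implications. For the forward one: assume $\mathcal{A}_{\alpha}\,\delta_{\alpha}^{*}\,\mathcal{B}_{\alpha}$ and let $U,V\subseteq\coprod_{\alpha}X_{\alpha}$ absorb $\iota(\mathcal{A}),\iota(\mathcal{B})$. Writing $U=(U\cap X_{\alpha})\cup(U\setminus X_{\alpha})$ and applying the third cluster axiom inside $\iota(\sigma_{\alpha})$ — the piece $U\setminus X_{\alpha}$ is impossible, since it would have to be $\delta$-close to $X_{\alpha}\in\iota(\sigma_{\alpha})$ — gives $U\cap X_{\alpha}\in\iota(\sigma_{\alpha})$, hence $U\cap X_{\alpha}\in\sigma_{\alpha}$ after identifying $\sigma_{\alpha}$ with $\{C\subseteq X_{\alpha}\mid C\in\iota(\sigma_{\alpha})\}$ via Proposition \ref{cluster_in_a_subspace}; so $U\cap X_{\alpha}$ and $V\cap X_{\alpha}$ absorb $\mathcal{A}_{\alpha},\mathcal{B}_{\alpha}$, are therefore $\delta_{\alpha}$-close, and thus $U\,\delta\,V$, i.e.\ $\iota(\mathcal{A})\,\delta^{*}\,\iota(\mathcal{B})$. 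For the converse I argue contrapositively: if $\mathcal{A}_{\alpha}\,{\centernot{\delta}}_{\alpha}^{*}\,\mathcal{B}_{\alpha}$ for every $\alpha$, then for each $\alpha$ there are $U_{\alpha},V_{\alpha}\subseteq X_{\alpha}$ absorbing $\mathcal{A}_{\alpha},\mathcal{B}_{\alpha}$ with $U_{\alpha}\,{\centernot{\delta}}_{\alpha}\,V_{\alpha}$; setting $U=\bigcup_{\alpha}U_{\alpha}$ and $V=\bigcup_{\alpha}V_{\alpha}$, upward closure of clusters makes $U$ absorb $\iota(\mathcal{A})$ and $V$ absorb $\iota(\mathcal{B})$ (because $U_{\alpha}\subseteq U$ and $U_{\alpha}\in\sigma_{\alpha}\subseteq\iota(\sigma_{\alpha})$ for $\sigma_{\alpha}\in\mathcal{A}_{\alpha}$), while $U\,{\centernot{\delta}}\,V$ is immediate from Definition \ref{coproduct_definition} since $U\cap X_{\gamma}=U_{\gamma}$ is ${\centernot{\delta}}_{\gamma}$ to $V\cap X_{\gamma}=V_{\gamma}$ for all $\gamma$. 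Hence $\iota(\mathcal{A})\,{\centernot{\delta}}^{*}\,\iota(\mathcal{B})$, so $\iota$ is a proximity embedding.

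To finish the embedding claim I would observe that the restriction of $\iota$ to point clusters is exactly the canonical dense embedding $\coprod_{\alpha}X_{\alpha}\hookrightarrow\overline{\coprod_{\alpha}X_{\alpha}}$ of Theorem \ref{theorem11} — the check comes down to the fact that $\{x\}\,\delta_{\alpha}\,A$ together with $\{x\}\,\delta_{\alpha}\,B$ forces $A\,\delta_{\alpha}\,B$, which is the first cluster axiom for the point cluster of $x$ — so the image of $\iota$ contains a dense subset and is therefore dense. Then, when $I$ is finite, $\coprod_{\alpha}\mathfrak{X}_{\alpha}$ is a finite disjoint union of compact Hausdorff spaces and hence compact, and a compact dense subspace of a Hausdorff space must be the whole space; so $\iota$ is surjective, hence a proximity isomorphism.

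For the converse of the last assertion, suppose infinitely many $X_{\alpha}$ are nonempty; pick distinct indices $\alpha_{1},\alpha_{2},\dots$, points $x_{n}\in X_{\alpha_{n}}$, and set $S=\{x_{n}\mid n\in\mathbb{N}\}$. By Definition \ref{coproduct_definition} the proximity $\delta$ restricts on $S$ to the discrete proximity, so $S$ (being infinite) carries a cluster $\sigma_{S}$ that is not a point cluster (for instance one induced by a free ultrafilter on $S$); let $\sigma$ be the cluster of $\coprod_{\alpha}X_{\alpha}$ it generates. I would then verify $X_{\alpha}\notin\sigma$ for every $\alpha$: for $\alpha\notin\{\alpha_{n}\}$ no subset of $S$ is $\delta$-close to $X_{\alpha}$; for $\alpha=\alpha_{n}$ one has $X_{\alpha_{n}}\,\delta\,D\iff x_{n}\in D$ for $D\subseteq S$, and $S\setminus\{x_{n}\}\in\sigma_{S}$. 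Since every cluster in the image of $\iota$ contains some $X_{\alpha}$, the cluster $\sigma$ lies outside the image, so $\iota$ is not onto and hence not a proximity isomorphism. I expect the converse inclusion in the proximity-embedding step to be the crux: it is where one must manufacture the separating sets $U,V$ in the coproduct out of separating sets in the factors and control, simultaneously, both their absorption of $\iota(\mathcal{A}),\iota(\mathcal{B})$ and their non-proximity, using the full interaction of Proposition \ref{generating_cluster} and Proposition \ref{cluster_in_a_subspace} with the definition of the coproduct proximity.
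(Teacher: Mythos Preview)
Your proposal is correct and follows essentially the same route as the paper: define the map via Proposition~\ref{generating_cluster}, prove injectivity using that distinct $X_{\alpha}$'s are far in the coproduct together with the uniqueness in Proposition~\ref{cluster_in_a_subspace}, and verify the two directions of the proximity-embedding condition by passing between absorbing sets in $\coprod_{\alpha}X_{\alpha}$ and absorbing sets in the individual $X_{\alpha}$ exactly as you do. The only noteworthy differences are in the ``if and only if'' clause: for the finite case the paper argues surjectivity directly from the third cluster axiom (a finite union lies in $\sigma$, so some $X_{\alpha}\in\sigma$, hence Proposition~\ref{cluster_in_a_subspace} applies), whereas you invoke compactness of a finite disjoint union of compacta; and for the infinite case the paper simply observes that $\coprod_{\alpha}\mathfrak{X}_{\alpha}$ is not compact while its image would have to be, whereas you build an explicit cluster outside the image via a free ultrafilter on a countable transversal --- which is precisely the construction the paper records separately in the Example immediately following the theorem. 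Both pairs of arguments are interchangeable and equally short.
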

\begin{proof}
For any $\alpha,$ we have that $\delta_{\alpha}$ is a subspace proximity on $X_{\alpha}$ induced by $\delta.$  Thus, by Proposition \ref{generating_cluster}, we know that every cluster $\sigma_{\alpha}$ in $X_{\alpha}$ generates a unique cluster $\sigma$ in $\coprod_{\alpha} X_{\alpha}$ defined by
\[\sigma:=\{A \subseteq \coprod_{\alpha} X_{\alpha} \mid A\delta B \text{ for every } B \in \sigma_{\alpha}\}.\]
In particular, $\sigma_{\alpha} \subseteq \sigma.$ Define $F: \coprod_{\alpha} \mathfrak{X}_{\alpha}\to \overline{\coprod_{\alpha} X_{\alpha}}$ by sending a a cluster in $X_{\alpha}$ for some $\alpha$ (i.e., a point in $\coprod_{\alpha} \mathfrak{X}_{\alpha}$) to the unique cluster it generates in $\coprod_{\alpha} X_{\alpha}$ (i.e., a point in $\overline{\coprod_{\alpha} X_{\alpha}}$). To see that the map $F$ is injective, for contradiction assume that $F(\sigma_{\alpha})=F(\sigma_{\beta})=\sigma$ for some $\sigma.$ If $\sigma_{\alpha}$ and $\sigma_{\beta}$ are clusters in different spaces $X_{\alpha}$ and $X_{\beta},$ respectively, then $X_{\alpha} \in \sigma_{\alpha}\subseteq \sigma$ and $X_{\beta} \in \sigma_{\beta}\subseteq \sigma,$ but $X_{\alpha} {\centernot{\delta}} X_{\beta},$ a contradiction to the definition of a cluster. Thus, $\sigma_{\alpha}$ and $\sigma_{\beta}$ are clusters in the same space $X_{\alpha}.$ However, since $X_{\alpha} \in \sigma,$ by Proposition \ref{cluster_in_a_subspace} there can be only one cluster in $X_{\alpha}$ generating $\sigma.$ Thus, it has to be that $\sigma_{\alpha}=\sigma_{\beta},$ which proves injectivity. 

To see that $F$ is a proximity mapping, let $\mathcal{A} \delta_1 \mathcal{B},$ where $\mathcal{A},\mathcal{B} \subseteq \coprod_{\alpha} \mathfrak{X}_{\alpha}.$ In particular, this means that there exists an $\alpha$ such that $\mathcal{A}_{\alpha}\delta_{\alpha}^* \mathcal{B}_{\alpha}.$ Thus, for any subset $A_{\alpha}\subseteq X_{\alpha}$ that absorbs $\mathcal{A}_{\alpha}$ and any subset $B_{\alpha}\subseteq X_{\alpha}$ that absorbs $\mathcal{B}_{\alpha},$ we have that $A_{\alpha} \delta_{\alpha} B_{\alpha}.$ For contradiction, assume that  $F(\mathcal{A}){\centernot{\delta}}^* F(\mathcal{B}).$ This means that there exists a subset $A\subseteq \coprod_{\alpha} X_{\alpha}$ that absorbs $F(\mathcal{A})$ and a subset $B\subseteq \coprod_{\alpha} X_{\alpha}$ that absorbs $F(\mathcal{B})$ such that $A{\centernot{\delta}}B.$ In particular, $A_{\alpha} {\centernot{\delta}}_{\alpha} B_{\alpha}.$ Thus, it is enough to show that $A_{\alpha}$ absorbs $\mathcal{A}_{\alpha}$ and $B_{\alpha}$ absorbs $\mathcal{B}_{\alpha},$ since this will contradict $\mathcal{A}_{\alpha}\delta_{\alpha}^* \mathcal{B}_{\alpha}.$ For contradiction, assume that $A_{\alpha}$ does not absorb $\mathcal{A}_{\alpha}.$ This means that there exists a cluster $\sigma_{\alpha}$ in $\mathcal{A}_{\alpha}$ such that $A_{\alpha} \notin \sigma_{\alpha}.$ Notice that this also means that $A_{\alpha} \notin \sigma:=F(\sigma_{\alpha})$ (because by Proposition \ref{cluster_in_a_subspace},  we know that $\sigma$ and $\sigma_{\alpha}$ agree on $X_{\alpha}$). This shows that $A \notin \sigma,$ since 
\[A=A_{\alpha} \cup \big((\coprod_{\alpha}X_{\alpha})\setminus X_{\alpha}\big),\]
but $A_{\alpha} \notin \sigma$ and $\big((\coprod_{\alpha}X_{\alpha})\setminus X_{\alpha}\big) \notin \sigma$ (where the latter non-containment follows from the fact that $X_{\alpha} \in \sigma_{\alpha} \subseteq \sigma$ and the fact that $X_{\alpha}{\centernot{\delta}} \big((\coprod_{\alpha}X_{\alpha})\setminus X_{\alpha}\big)$). This contradicts the fact that $A$ absorbs $F(\mathcal{A}).$ Thus, $A_{\alpha}$ absorbs $\mathcal{A}_{\alpha}.$ Similarly, one can show that $B_{\alpha}$ absorbs $\mathcal{B}_{\alpha},$ which finishes the proof that $F$ is a proximity map.

To see that the inverse map (with the domain of the inverse map being the image of $F$) is also a proximity map, assume that $F(\mathcal{A})\delta^* F(\mathcal{B}),$ and for contradiction also assume that $\mathcal{A}{\centernot{\delta}}_1\mathcal{B}.$ This means that for all $\alpha,$ there exists a subset $A_{\alpha}\subseteq X_{\alpha}$ that absorbs $\mathcal{A}_{\alpha}$ and a subset $B_{\alpha}\subseteq X_{\alpha}$ that absorbs $\mathcal{B}_{\alpha}$ such that $A_{\alpha}{\centernot{\delta}}_{\alpha}B_{\alpha}.$ Let $A:=\cup_{\alpha}A_{\alpha}$ and $B:=\cup_{\alpha}B_{\alpha}.$ We claim that $A$ and $B$ are subsets of $\coprod_{\alpha}X_{\alpha}$ that absorb $F(\mathcal{A})$ and $F(\mathcal{B}),$ respectively, but $A{\centernot{\delta}}B.$ This will contradict $F(\mathcal{A})\delta^* F(\mathcal{B}),$ and finish the proof that $\mathcal{A}\delta_1\mathcal{B}.$ Since for all $\alpha,$ we have that $A_{\alpha}{\centernot{\delta}}_{\alpha}B_{\alpha},$ it is clear that $A{\centernot{\delta}}B.$ Let us show that $A$ absorbs $F(\mathcal{A}).$ For contradiction, assume that $A$ does not absorb $F(\mathcal{A}),$ i.e., there exists $\sigma_{\alpha}$ in some $X_{\alpha}$ such that $A\notin \sigma:=F(\sigma_{\alpha}).$ In particular, $A_{\alpha} \notin \sigma_{\alpha}$ (it is because $A_{\alpha} \in \sigma_{\alpha}$ would imply $A_{\alpha} \in \sigma,$ and consequently $A \in \sigma$). But this contradicts the fact that $A_{\alpha}$ absorbs $\sigma_{\alpha}.$ Thus, it has to be that $A$ absorbs $F(\mathcal{A}).$ Similarly one can show that $B$ absorbs $F(\mathcal{B}).$ This finishes that proof that $F$ is a proximity embedding. Consequently, we can think of $\coprod_{\alpha} \mathfrak{X}_{\alpha}$ as a subspace of $\overline{\coprod_{\alpha} X_{\alpha}}.$

The density of $F$  follows from the fact that 
\[\coprod_{\alpha} X_{\alpha} \subseteq \coprod_{\alpha} \mathfrak{X}_{\alpha}\subseteq \overline{\coprod_{\alpha} X_{\alpha}},\]
and the fact that any proximity space in dense in its Smirnov compactification (i.e., $\coprod_{\alpha} X_{\alpha}$ is dense in $\overline{\coprod_{\alpha} X_{\alpha}}$).

It is clear that the dense proximity embedding from the above theorem is not a proximity isomorphism when the index set is infinite, since the induced topology on $(\coprod_{\alpha} \mathfrak{X}_{\alpha}, \delta_1)$ is not compact, whereas the induced topology on $(\overline{\coprod_{\alpha} X_{\alpha}}, \delta^*)$ is compact.
To see the converse, assume that the index set is finite. To show that $F$ is a proximity isomorphism, it is enough to show that $F$ is surjective. Let $\sigma$ be a cluster in $\coprod_{\alpha} X_{\alpha}.$ Since  $\coprod_{\alpha} X_{\alpha} \in \sigma$ and the index set is finite, by the third property of a cluster it has to be that $X_{\alpha} \in \sigma$ for some $\alpha.$ Consequently, by Proposition \ref{cluster_in_a_subspace} there exists a cluster $\sigma_{\alpha}$ in $X_{\alpha}$ that generates $\sigma,$ i.e., $F(\sigma_{\alpha})=\sigma.$ Thus, $F$ is surjective.
\end{proof}

In fact, when the index set is infinite, it is easy to construct points in $\overline{\coprod_{\alpha} X_{\alpha}}$ that are not images of any points in $\coprod_{\alpha} \mathfrak{X}_{\alpha},$ i.e., one can easily construct a cluster in $\coprod_{\alpha} X_{\alpha}$ that is not generated by any cluster in $X_{\alpha}$ for any $\alpha,$ as the following example shows.
\begin{example}
With the notation from Theorem \ref{theorem1} (and assuming that the index set is infinite), define
\[L:=\{A \subseteq \coprod_{\alpha} X_{\alpha} \mid X_{\alpha} \subseteq A \text{ for all but finitely many } \alpha\}.\]
It is easy to see that $L$ is a filter on the set $\coprod_{\alpha} X_{\alpha}.$ By Theorem 5.8 in \cite{proximityspaces}, there exists a cluster $\sigma$ in $\coprod_{\alpha} X_{\alpha}$ such that
\[\sigma:=\{A \subseteq \coprod_{\alpha} X_{\alpha} \mid A \delta B \text{ for every } B\in L\}.\]
In particular, $L \subseteq \sigma.$ Notice that $\sigma$ cannot be generated by any cluster $\sigma_{\alpha}$ in $X_{\alpha}$ for any $\alpha.$ For if $\sigma$ is generated by some cluster $\sigma_{\alpha}$ in $X_{\alpha}$ for some $\alpha,$ then $X_{\alpha} \in \sigma_{\alpha} \subseteq \sigma$ and $\Big((\coprod_{\alpha} X_{\alpha})\setminus X_{\alpha}\Big) \in L \subseteq \sigma,$ but 
\[\Big((\coprod_{\alpha} X_{\alpha})\setminus X_{\alpha}\Big){\centernot{\delta}} X_{\alpha},\]
a contradiction.
\end{example}

With the notation from Theorem \ref{theorem1}, points in $\coprod_{\alpha} \mathfrak{X}_{\alpha}$ can be identified with points in $\overline{\coprod_{\alpha}X_{\alpha}}.$ In particular, any cluster $\sigma_{\alpha}$ in $X_{\alpha}$ for some $\alpha$ (which is a collection of subsets of $X_{\alpha}$) is identified with the cluster $\sigma$ it generates in $\coprod_{\alpha}X_{\alpha}$ (which is a collection of subsets of $\coprod_{\alpha}X_{\alpha}$). In particular,
\[\sigma=\{A \subseteq \coprod_{\alpha}X_{\alpha} \mid A\delta B \text{ for all } B \in \sigma_{\alpha}\}.\]
But by the definition of $\delta,$ it is easy to see that this is equivalent to 
\[\sigma=\{A \subseteq \coprod_{\alpha}X_{\alpha} \mid \text{ there exists } B \in \sigma_{\lambda} \text{ such that } B \subseteq A\},\]
i.e., $\sigma$ consists of supersets of elements of $\sigma_{\alpha}.$

\begin{corollary}
With the notation from Theorem \ref{theorem1}, $(\overline{\coprod_{\alpha} X_{\alpha}}, \delta^*)$ is the Smirnov compactification of  $(\coprod_{\alpha} \mathfrak{X}_{\alpha}, \delta_1).$ In particular, $\mathcal{A},\mathcal{B} \subseteq \coprod_{\alpha} \mathfrak{X}_{\alpha}$ are $\delta_1$ close if and only if their closures in $\overline{\coprod_{\alpha}X_{\alpha}}$ intersect.
\end{corollary}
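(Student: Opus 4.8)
The plan is to deduce everything from the uniqueness clause of Theorem~\ref{theorem11}. First I would check that the phrase ``Smirnov compactification of $(\coprod_{\alpha} \mathfrak{X}_{\alpha}, \delta_1)$'' makes sense, i.e.\ that $\delta_1$ is separated: each $\delta_{\alpha}^*$ is the (standard) proximity on the compact Hausdorff space $\mathfrak{X}_{\alpha}$, hence separated, so by the proposition characterizing separatedness of a coproduct, $\delta_1$ is separated. Then I would simply quote Theorem~\ref{theorem1}, which gives a dense proximity embedding $F\colon(\coprod_{\alpha} \mathfrak{X}_{\alpha},\delta_1)\hookrightarrow(\overline{\coprod_{\alpha} X_{\alpha}},\delta^*)$ into a compact Hausdorff space. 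Since a separated proximity space has, up to proximity isomorphism, a \emph{unique} compact Hausdorff space into which it proximally and densely embeds (the last sentence of Theorem~\ref{theorem11}), the space $(\overline{\coprod_{\alpha} X_{\alpha}},\delta^*)$ must be that space, i.e.\ the Smirnov compactification of $(\coprod_{\alpha} \mathfrak{X}_{\alpha},\delta_1)$.

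For the ``in particular'' clause I would invoke the explicit formula recorded just after Theorem~\ref{theorem11}: for a completely regular Hausdorff space $X$ together with a Hausdorff compactification $\bar X$, the compatible proximity on $X$ whose Smirnov compactification is $\bar X$ is given by $A\,\delta\,B \iff cl_{\bar X}(A)\cap cl_{\bar X}(B)\neq\emptyset$. Applying this with $X=\coprod_{\alpha} \mathfrak{X}_{\alpha}$, the compactification $\overline{\coprod_{\alpha} X_{\alpha}}$, and (by the first part of the corollary) the compatible proximity $\delta_1$ yields exactly
\[
\mathcal{A}\,\delta_1\,\mathcal{B}\iff cl_{\overline{\coprod_{\alpha} X_{\alpha}}}(\mathcal{A})\cap cl_{\overline{\coprod_{\alpha} X_{\alpha}}}(\mathcal{B})\neq\emptyset
\]
for all $\mathcal{A},\mathcal{B}\subseteq\coprod_{\alpha} \mathfrak{X}_{\alpha}$.

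I do not expect a genuine obstacle: the argument is essentially a one-line appeal to uniqueness of the Smirnov compactification. The only points needing a little care are the bookkeeping that identifies $\coprod_{\alpha} \mathfrak{X}_{\alpha}$ with its image under the embedding $F$ of Theorem~\ref{theorem1}, and the (routine) verification that $\delta_1$ is separated so that the statement is well posed; both of these are immediate from results already established.
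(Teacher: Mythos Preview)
Your proposal is correct and follows essentially the same route as the paper: the first statement is deduced from Theorem~\ref{theorem1} together with the uniqueness of the Smirnov compactification, and the second from the description of the compatible proximity in terms of closures in the compactification. Your extra verification that $\delta_1$ is separated is a welcome piece of bookkeeping that the paper leaves implicit.
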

\begin{proof}
The first statement follows from Theorem \ref{theorem1} and the fact that the Smirnov compactification of a proximity space is unique. The second statement follows from the fact that compact Hausdorff spaces have only one compatible proximity defined by: two subsets are close if and only if their closures intersect.
\end{proof}

\section{Examples}\label{examples_section}
The coproduct of a collection of proximity spaces has the associated proximity, and consequently the  unique Smirnov compactification. Before we explore properties of that compactification, in this section we provide several examples of the Smirnov compactifications of certain coproducts. In all of the following examples, the coproduct $(\coprod_{\alpha}X_{\alpha}, \delta)$ is always equipped with the topology induced by $\delta.$ Thus, the statements such as "the compactification of $\coprod_{\alpha}X_{\alpha}$" are unambiguous.

\begin{example}\label{example1}
Let $X_1:=\mathbb{R}$ be equipped with the standard proximity
\[A \delta_1 B \iff \bar{A} \cap \bar{B} \neq \emptyset,\]
let $X_2:=\mathbb{R}$ be equipped with the metric proximity
\[A \delta_2 B \iff d(A,B)=0,\]
and let $X_3:=\mathbb{R}$ be equipped with the Alexandroff proximity
\[ A \delta_3 B \iff A,B \text{ are not compact in } \mathbb{R} \text{ or } \bar{A} \cap \bar{B} \neq \emptyset.\]
All of these proximities induce the same Euclidean topology on $\mathbb{R}.$ Also, the Smirnov compactification of the coproduct of $X_1, X_2,$ and $X_3$ is homeomorphic to the disjoint union of the Stone-Cech compactification of $\mathbb{R},$ the minimum uniform compactification of $\mathbb{R},$ and the one-point compactification of $\mathbb{R}.$
\end{example}

\begin{example}\label{example2}
Let $(X_{\alpha}, \delta_{\alpha})_{\alpha}$ be an infinite collection of proximity spaces such that each proximity space $X_{\alpha}$ consists of a single point. Notice that two subsets of the coproduct $(\coprod_{\alpha}X_{\alpha}, \delta)$ are close if and only if they intersect. Thus, the coproduct proximity $\delta$ is the finest proximity compatible with the induced topology, and consequently the Smirnov compactification of the coproduct is homeomorphic to the Stone-Cech compactification of $\coprod_{\alpha}X_{\alpha}.$
\end{example}

\begin{example}\label{example3}
To generalize the above example, let $(X_{\alpha}, \delta_{\alpha})_{\alpha}$ be an infinite collection of proximity spaces such that each proximity space $X_{\alpha}$ is equipped with the discrete proximity $\delta_{\alpha},$ i.e., for any two subsets $A_{\alpha},B_{\alpha} \subseteq X_{\alpha},$ we have
\[A_{\alpha}\delta_{\alpha}B_{\alpha} \iff A\cap B \neq \emptyset.\]
Then any two subsets of the coproduct $(\coprod_{\alpha}X_{\alpha}, \delta)$ are close if and only if they intersect, and consequently the Smirnov compactification of the coproduct is homeomorphic to the Stone-Cech compactification of $\coprod_{\alpha}X_{\alpha}.$
\end{example}

In all of the above examples, the type of the compactification of the coproduct somehow agreed with the compactifications of the individual spaces. In Example \ref{example1}, the compactification of the coproduct was just the disjoint union of the individual compactifications. In Example \ref{example2} and Example \ref{example3}, the individual compactifications as well as the compactifications of the coproducts were the Stone-Cech compactifications. But that phenomenon does not always happen, as the following examples show.

\begin{example}
Let $(X_{\alpha})_{\alpha}$ be an infinite collection of locally compact Hausdorff spaces such that each space consists of at least $2$ points. Equip each $X_{\alpha}$ with the compatible Aleksandroff proximity $\delta_{\alpha}$ i.e., for any two subsets $A_{\alpha},B_{\alpha} \subseteq X_{\alpha},$ we have
\[ A_{\alpha}\delta_{\alpha} B_{\alpha} \iff A_{\alpha},B_{\alpha} \text{ are not compact in } X_{\alpha} \text{ or } cl_{X_{\alpha}}(A_{\alpha}) \cap cl_{X_{\alpha}}(B_{\alpha}) \neq \emptyset.\]
Then clearly neither the compactification of the coproduct is the one-point compactification, nor the compactification of the coproduct is the disjoint union of one point compactifications.
\end{example}

\begin{example}
Let $(X_{\alpha},d_{\alpha})_{\alpha}$ be an infinite collection of metric spaces. Equip each $X_{\alpha}$ with its metric proximity $\delta_{\alpha},$ i.e., for any two subsets $A_{\alpha},B_{\alpha} \subseteq X_{\alpha},$ we have
\[A_{\alpha}\delta_{\alpha}B_{\alpha} \iff d_{\alpha}(A_{\alpha},B_{\alpha})=0.\]
Choose a base point $x_{\alpha} \in X_{\alpha}$ for each $\alpha$. Define a metric $d$ on $\coprod_{\alpha} X_{\alpha}$ by
\[
  d(x,y) =
  \begin{cases}
    d_{\alpha}(x,y) & \text{if } x,y \in X_{\alpha} \\
d_{\alpha}(x,x_{\alpha})+d_{\beta}(x_{\beta},y)+1
& \text{if } x \in X_{\alpha}, y \in X_{\beta} \text{ and } \alpha \neq \beta
  \end{cases} 
\]
Notice that for any two subsets $A$ and $B$ of the coproduct $(\coprod_{\alpha}X_{\alpha}, \delta),$ we have that
\[A{\delta}B \implies d(A,B)=0.\]
It is easy to see that the opposite implication does not have to be true for certain collection of metric spaces. For example, take the coproduct of the real lines indexed by the positive integers and set $A:=\{1_n \mid n \in \mathbb{N}\}$ and $B:=\{1_n+\frac{1}{n} \mid n \in \mathbb{N}\}$. Then $d(A,B)=0,$ but $A{\centernot{\delta}}B.$ Thus, even though the metric $d$ agrees with $d_{\alpha}$ on $X_{\alpha}$ for all $\alpha$ and induces the same topology on the coproduct as $\delta,$ the Smirnov compactification of the coproduct $(\coprod_{\alpha}X_{\alpha}, \delta)$ can be strictly larger than the minimum uniform compactification of $\coprod_{\alpha}X_{\alpha}$ associated to the metric $d.$
Also notice that the the Smirnov compactification of the coproduct $(\coprod_{\alpha}X_{\alpha}, \delta)$ does not have to be homeomorphic to the Stone-Cech compactification of $(\coprod_{\alpha}X_{\alpha}, \delta).$ One can see that by taking the coproduct  of the real lines indexed by the positive integers as before, and noting that the proximity on $\coprod_{\alpha}X_{\alpha}$ associated to the Stone-Cech compactification is  given by: two subsets of $\coprod_{\alpha}X_{\alpha}$ are close if and only if their closures (in the topology induced by $\delta$) intersect (this follows from the fact that the disjoint union of the real lines is normal). But then by taking 
\[A:=\{n \in \mathbb{R}_1 \subseteq \coprod_{\mathbb{N}}\mathbb{R} \mid n \in \mathbb{N}\},\]
i.e., $A$ is a set of positive integers in the first real line, and 
\[B:=\{n+\frac{1}{n} \in \mathbb{R}_1 \subseteq \coprod_{\mathbb{N}}\mathbb{R} \mid n \in \mathbb{N}\},\]
we see that $A \delta B,$ but the closures of $A$ and $B$ in the topology induced by $\delta$ do not intersect. Thus, the Smirnov compactification of $(\coprod_{\mathbb{N}}\mathbb{R}, \delta)$ is strictly larger than the minimum uniform compactification of $\coprod_{\mathbb{N}}\mathbb{R}$ associated to the metric $d,$ but strictly smaller then the Stone-Cech compactification of $\coprod_{\mathbb{N}}\mathbb{R}$ associated to the topology induced by $\delta.$
\end{example}

\section{Properties of the Smirnov compactification of the coproduct}\label{properties_of_the_coproduct}
In this section, we explore some properties of the Smirnov compactification of the coproduct of separated  proximity spaces.

\subsection{Coproduct and the Stone-Cech compactification}\label{Stone-Cech}

The last example in the previous section suggests the following proposition.

\begin{proposition}\label{first_half_of_the_theorem}
Let $(X_{\alpha}, \delta_{\alpha})_{\alpha}$ be a collection of separated proximity spaces and let $(\coprod_{\alpha} X_{\alpha}, \delta)$ be the associated coproduct. If for some $\alpha$ one of the two equivalent conditions is satisfied:
\begin{enumerate}
\item the Smirnov compactification of $(X_{\alpha}, \delta_{\alpha})$ is not homeomorphic to the Stone-Cech compactification of $(X_{\alpha}, \delta_{\alpha}),$ 
\item there exist two subsets $A_{\alpha},B_{\alpha}\subseteq X_{\alpha}$ such that $A\delta_{\alpha} B,$ but $A$ and $B$ are functionally distinguishable,  
\end{enumerate}
then the Smirnov compactification of the coproduct $(\coprod_{\alpha} X_{\alpha}, \delta)$ is not homeomorphic to the Stone-Cech compactification of $(\coprod_{\alpha} X_{\alpha},\delta).$
\end{proposition}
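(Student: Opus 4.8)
The plan is to reduce the statement about compactifications to the purely proximal assertion that the coproduct proximity $\delta$ is \emph{not} the Stone-Cech proximity on $\coprod_{\alpha}X_{\alpha}$, and then to exhibit an explicit witnessing pair of sets. First I would record the equivalence of conditions (1) and (2), which is a restatement of the general theory recalled in Section~\ref{preliminaries}: since $\coprod_\alpha X_\alpha$ and each $X_\alpha$ are separated, their induced topologies are completely regular Hausdorff, so the Stone-Cech proximity is defined, and the Smirnov compactification of a separated proximity space coincides with its Stone-Cech compactification exactly when its proximity \emph{is} the Stone-Cech proximity (by the bijective correspondence between compatible proximities and compactifications, together with uniqueness of the compatible proximity on a compact Hausdorff space). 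Because the Stone-Cech compactification is the largest compactification, the Stone-Cech proximity is the finest compatible proximity, so $A_\alpha\,\delta_{SC}\,B_\alpha \implies A_\alpha\,\delta_\alpha\,B_\alpha$ always holds; hence $\delta_\alpha$ fails to equal the Stone-Cech proximity precisely when the reverse implication fails, i.e.\ when there exist $A_\alpha\,\delta_\alpha\,B_\alpha$ that are nonetheless functionally distinguishable. This establishes (1)$\iff$(2), and applying the same equivalence to $(\coprod_\alpha X_\alpha,\delta)$ shows it suffices to produce $A,B\subseteq \coprod_\alpha X_\alpha$ with $A\,\delta\,B$ but $A,B$ functionally distinguishable in $\coprod_\alpha X_\alpha$.

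Next I would fix an index $\alpha$ for which (2) holds, so there are sets $A_\alpha,B_\alpha\subseteq X_\alpha$ with $A_\alpha\,\delta_\alpha\,B_\alpha$ and a continuous $f_\alpha\colon X_\alpha\to[0,1]$ with $f_\alpha(A_\alpha)=\{0\}$ and $f_\alpha(B_\alpha)=\{1\}$, and regard $A:=A_\alpha$ and $B:=B_\alpha$ as subsets of $\coprod_\alpha X_\alpha$ sitting inside the $\alpha$-th summand. Then $A\cap X_\alpha=A_\alpha$ and $B\cap X_\alpha=B_\alpha$, so $A\,\delta\,B$ is immediate from Definition~\ref{coproduct_definition}. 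To see that $A$ and $B$ are functionally distinguishable in the coproduct, I would extend $f_\alpha$ to a map $f\colon \coprod_\alpha X_\alpha\to[0,1]$ by setting $f\equiv 0$ on every other summand. By the proposition in Section~\ref{coproduct_section} identifying the topology on the coproduct with the disjoint union topology, a function out of $\coprod_\alpha X_\alpha$ is continuous iff its restriction to each summand is; hence $f$ is continuous, and clearly $f(A)=\{0\}$ and $f(B)=\{1\}$. Thus $A\,\delta\,B$ while $A$ and $B$ are functionally distinguishable in $\coprod_\alpha X_\alpha$, which is exactly condition (2) for $(\coprod_\alpha X_\alpha,\delta)$; by the equivalence (1)$\iff$(2) applied to this space, its Smirnov compactification is not homeomorphic to its Stone-Cech compactification.

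I do not expect a genuine obstacle here. The only delicate point is the equivalence of (1) and (2), which must be argued by correctly invoking the maximality of the Stone-Cech compactification among compactifications of a fixed completely regular Hausdorff space (equivalently, the fineness of the Stone-Cech proximity) together with the uniqueness of the compatible proximity on a compact Hausdorff space; everything afterward is forced by the definition of $\delta$ and by the fact that the coproduct carries the disjoint union topology, so the separating function on a single summand extends for free.
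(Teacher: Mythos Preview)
Your proposal is correct and follows essentially the same approach as the paper: both argue the equivalence of (1) and (2) via the proximity--compactification correspondence, then push the witnessing pair $A_\alpha,B_\alpha$ into the coproduct and extend the separating function $f_\alpha$ by a constant on the remaining summands. Your write-up is in fact slightly cleaner, since you explicitly invoke the disjoint-union-topology proposition to justify continuity of the extension, and you extend by the constant $0$ (the paper's ``$x_0$'' is an evident typo).
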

\begin{proof}
Let $(X_{\alpha}, \delta_{\alpha})$ be as in the statement of (1). Since the proximity defined by "$A$ is close to $B$ if and only if $A$ and $B$ are functionally distinguishable" always induces the Stone-Cech compactification, it easily follows from the one-to-one correspondence of compatible proximities and compactifications that $(1)$ is equivalent to $(2).$ Thus, there exist subsets $A_{\alpha},B_{\alpha}\subseteq X_{\alpha}$ such that $A_{\alpha}\delta_{\alpha} B_{\alpha},$ but $A_{\alpha}$ and $B_{\alpha}$ are functionally distinguishable, i.e., there exists a continuous function $f_{\alpha}:X_{\alpha} \to [0,1]$ such that $f_{\alpha}(A_{\alpha})=0$ and $f_{\alpha}(B_{\alpha})=1.$  Thus, when $A$ and $B$ are considered as subsets of $\coprod_{\alpha} X_{\alpha},$ we have that $A\delta B,$ but $A$ and $B$ are functionally distinguishable by a continuous function $f: \coprod X_{\alpha} \to [0,1]$ defined by
\[
  f(x) =
  \begin{cases}
    f_{\alpha}(x) & \text{if } x \in X_{\alpha}, \\
x_0 & \text{otherwise},
  \end{cases}
\]
where $x_0$ is an arbitrary fixed point in $\coprod_{\alpha} X_{\alpha}.$ Consequently, the Stone-Cech compactification of $(\coprod_{\alpha} X_{\alpha}, \delta)$ is strictly larger than the Smirnov compactification of $(\coprod_{\alpha} X_{\alpha}, \delta).$
\end{proof}

\begin{theorem}\label{theorem2}
Let $(X_{\alpha}, \delta_{\alpha})_{\alpha}$ be a collection of separated proximity spaces and let $(\coprod_{\alpha} X_{\alpha}, \delta)$ be the associated coproduct. Then the Smirnov compactification of the coproduct $(\coprod_{\alpha} X_{\alpha}, \delta)$ is homeomorphic to the Stone-Cech compactification of $(\coprod_{\alpha} X_{\alpha},\delta)$ if and only if the Smirnov compactification of $(X_{\alpha}, \delta_{\alpha})$ is homeomorphic to the Stone-Cech compactification of $(X_{\alpha},\delta_{\alpha})$ for all $\alpha.$
\end{theorem}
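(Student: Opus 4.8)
The plan is to prove both directions, using Proposition \ref{first_half_of_the_theorem} for the contrapositive of one implication and a direct argument with functionally distinguishable sets for the other. The forward direction (Smirnov of the coproduct $\cong$ Stone-Cech of the coproduct $\implies$ each Smirnov of $X_\alpha \cong$ Stone-Cech of $X_\alpha$) is exactly the contrapositive of Proposition \ref{first_half_of_the_theorem}: if some $X_\alpha$ fails the condition, then the Smirnov compactification of the coproduct is strictly smaller than its Stone-Cech compactification. So this direction is essentially immediate once we invoke that proposition.

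For the converse, assume that for every $\alpha$ the Smirnov compactification of $(X_\alpha,\delta_\alpha)$ is homeomorphic to the Stone-Cech compactification of $(X_\alpha,\delta_\alpha)$; equivalently (via the one-to-one correspondence between compatible proximities and compactifications, as used in the proof of Proposition \ref{first_half_of_the_theorem}), each $\delta_\alpha$ is the Stone-Cech proximity on $X_\alpha$, i.e.\ $A_\alpha \delta_\alpha B_\alpha$ iff $A_\alpha$ and $B_\alpha$ are functionally indistinguishable in $X_\alpha$. I want to show $\delta$ is the Stone-Cech proximity on $\coprod_\alpha X_\alpha$, i.e.\ for all $A,B \subseteq \coprod_\alpha X_\alpha$, $A\delta B$ iff $A$ and $B$ are functionally indistinguishable. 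The "only if" part of this (i.e.\ $A\delta B \implies$ functionally indistinguishable) holds for \emph{any} proximity $\delta$ compatible with the topology, since the Stone-Cech proximity is the finest compatible proximity; alternatively it is immediate because a continuous $f$ separating $A,B$ restricts to one separating $A_\alpha,B_\alpha$ for the witnessing $\alpha$. The real content is the "if" direction: suppose $A\centernot\delta B$; I must produce a continuous $f:\coprod_\alpha X_\alpha \to [0,1]$ with $f(A)=0$, $f(B)=1$. By definition of $\delta$, $A\centernot\delta B$ means $A_\alpha \centernot\delta_\alpha B_\alpha$ for every $\alpha$, hence by hypothesis for each $\alpha$ there is a continuous $f_\alpha:X_\alpha\to[0,1]$ with $f_\alpha(A_\alpha)=0$ and $f_\alpha(B_\alpha)=1$. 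Since the topology on the coproduct is the disjoint union topology (shown earlier), the function $f$ defined piecewise by $f|_{X_\alpha}=f_\alpha$ is continuous, and clearly $f(A)=0$, $f(B)=1$. Therefore $A$ and $B$ are functionally distinguishable, completing the proof that $\delta$ is the Stone-Cech proximity, hence that the Smirnov compactification of the coproduct is the Stone-Cech compactification.

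The main obstacle — if there is one — is purely bookkeeping: making sure the equivalence "Smirnov $\cong$ Stone-Cech $\iff$ $\delta_\alpha$ is the Stone-Cech proximity" is legitimately available (it is, by the bijective correspondence theorem quoted in the preliminaries together with the fact, recorded in Example \ref{Stone-Cech_proximity}, that the Stone-Cech proximity induces the Stone-Cech compactification), and handling the empty-coproduct or single-space edge cases trivially. No delicate point-set topology is needed beyond the already-established fact that the coproduct carries the disjoint union topology, which is what makes the piecewise-defined $f$ continuous. I would present the converse first as the substantive half and dispatch the forward direction in one line by citing Proposition \ref{first_half_of_the_theorem}.
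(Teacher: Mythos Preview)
Your proposal is correct and follows essentially the same approach as the paper. The only cosmetic difference is that the paper proves the converse by contrapositive (take $A\delta B$ with $A,B$ functionally distinguishable in the coproduct, then restrict the separating function to the witnessing $X_\alpha$), whereas you prove it directly (take $A\centernot\delta B$, obtain separating $f_\alpha$ on each piece, and glue); these are the same argument read in opposite directions.
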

\begin{proof}
The forward direction is the statement of Proposition \ref{first_half_of_the_theorem}. To prove the converse, assume that the Smirnov compactification of the coproduct $(\coprod_{\alpha} X_{\alpha}, \delta)$ is not homeomorphic to the Stone-Cech compactification of $(\coprod_{\alpha} X_{\alpha},\delta).$ Thus, there exist $A,B \subseteq \coprod_{\alpha}X_{\alpha}$ such that $A\delta B,$ but $A$ and $B$ are functionally distinguishable by a function $f:\coprod_{\alpha}X_{\alpha} \to [0,1].$ In particular, there exists $\alpha$ such that $A_{\alpha}\delta_{\alpha}B_{\alpha}$ and $A_{\alpha}$ and $B_{\alpha}$ are functionally distinguishable by a restriction of $f$ to $X_{\alpha}.$ Thus, the Smirnov compactification of $(X_{\alpha}, \delta_{\alpha})$ is not homeomorphic to the Stone-Cech compactification of $(X_{\alpha},\delta_{\alpha}).$ 
\end{proof}

\begin{corollary}
Let $(X_{\alpha})_{\alpha}$ be a collection of completely regular Hausdorff spaces. Then the Stone-Cech compactification of the disjoint union topology of the collection $(X_{\alpha})_{\alpha}$ is homeomorphic to the disjoint union topology of the Stone-Cech compactifications of all $X_{\alpha}$'s if and only if the index set is finite.
\end{corollary}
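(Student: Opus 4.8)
The plan is to deduce this directly from Theorems \ref{theorem1} and \ref{theorem2}, together with the fact (established earlier in the paper) that the topology induced by a coproduct proximity is the disjoint union topology. Equip each $X_{\alpha}$ with its Stone-Cech proximity (Example \ref{Stone-Cech_proximity}); as recalled in Section \ref{preliminaries}, its Smirnov compactification $\mathfrak{X}_{\alpha}$ is then $\beta X_{\alpha}$. Form the coproduct $(\coprod_{\alpha} X_{\alpha}, \delta)$. By the proposition identifying the coproduct topology with the disjoint union topology, $\delta$ induces on $\coprod_{\alpha} X_{\alpha}$ exactly the disjoint union topology, which is completely regular Hausdorff; so $\beta(\coprod_{\alpha} X_{\alpha})$ is well defined and depends only on this topology. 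First I would record that, since each $\mathfrak{X}_{\alpha}$ is the Stone-Cech compactification of $X_{\alpha}$, Theorem \ref{theorem2} shows that the Smirnov compactification $\overline{\coprod_{\alpha} X_{\alpha}}$ of $(\coprod_{\alpha} X_{\alpha}, \delta)$ is homeomorphic to $\beta(\coprod_{\alpha} X_{\alpha})$, with no finiteness hypothesis.

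For the "if" direction, suppose the index set is finite. Theorem \ref{theorem1} then provides a proximity isomorphism, hence in particular a homeomorphism, between $\coprod_{\alpha} \mathfrak{X}_{\alpha} = \coprod_{\alpha} \beta X_{\alpha}$ (carrying the coproduct proximity $\delta_1$, whose induced topology is again the disjoint union topology, this time of the $\mathfrak{X}_{\alpha}$'s) and $\overline{\coprod_{\alpha} X_{\alpha}}$. Composing with the homeomorphism of the previous paragraph yields $\coprod_{\alpha} \beta X_{\alpha} \cong \beta(\coprod_{\alpha} X_{\alpha})$, which is the disjoint-union description of the Stone-Cech compactification demanded by the statement.

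For the "only if" direction, suppose the index set is infinite (and, as usual, the $X_{\alpha}$ are nonempty, so each $\beta X_{\alpha}$ is nonempty). An infinite disjoint union of nonempty spaces is never compact, while $\beta(\coprod_{\alpha} X_{\alpha})$ is compact, so the two spaces cannot be homeomorphic. Equivalently, one may invoke the non-compactness half of Theorem \ref{theorem1} through the homeomorphism $\overline{\coprod_{\alpha} X_{\alpha}} \cong \beta(\coprod_{\alpha} X_{\alpha})$ obtained above.

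I do not expect a genuine obstacle here: the substantive content has already been carried out in Theorems \ref{theorem1} and \ref{theorem2}, and what remains is bookkeeping — checking that the disjoint union topology coincides with the topology induced by both $\delta$ and $\delta_1$, that $\beta$ of that topology is meaningful (complete regularity of disjoint unions of completely regular Hausdorff spaces), and dispatching the harmless convention about empty factors. The one point worth stating carefully is the translation between "proximity isomorphism" and "homeomorphism" when applying Theorem \ref{theorem1}, which is legitimate because proximity isomorphisms are homeomorphisms for the induced topologies.
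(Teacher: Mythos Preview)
Your proposal is correct and follows essentially the same approach as the paper, which simply says the corollary is an immediate consequence of Theorem~\ref{theorem1} and Theorem~\ref{theorem2}. You have spelled out the bookkeeping (identifying the coproduct topology with the disjoint union topology, translating proximity isomorphism to homeomorphism, and flagging the empty-factor convention) that the paper leaves implicit, but the argument is the same.
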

\begin{proof}
This is an immediate consequence of Theorem \ref{theorem1} and Theorem \ref{theorem2}.
\end{proof}

\begin{corollary}
Let $(X_{\alpha}, \delta_{\alpha})_{\alpha}$ be a collection of separated proximity spaces where each $\delta_{\alpha}$ is the Stone-Cech proximity, and where each $\delta_{\alpha}$ induces a normal topology on $X_{\alpha}.$ Let $(\coprod_{\alpha} X_{\alpha}, \delta)$ be the associated coproduct. Then 
\begin{enumerate}
\item $\delta$ is equinormal, i.e., $\delta$-close subsets of  $\coprod_{\alpha} X_{\alpha}$ have intersecting closures in $\coprod_{\alpha} X_{\alpha}.$
\item The proximity $\delta$ on $\coprod_{\alpha} X_{\alpha}$ can be alternatively described by
\[A \delta B \iff \overline{A} \cap \overline{B} \neq \emptyset,\]
where $\overline{A}$ means the closure of $A$ in $\coprod_{\alpha} X_{\alpha}.$
\item every real-valued continuous function on $\coprod_{\alpha} X_{\alpha}$ is a proximity map.
\end{enumerate}
\end{corollary}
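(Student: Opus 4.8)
The plan is to reduce all three items to a single elementary computation, namely an explicit description of $\overline{A}$ inside $\coprod_{\alpha} X_{\alpha}$. First I would translate the hypotheses: since each $\delta_{\alpha}$ is separated, its induced topology on $X_{\alpha}$ is Hausdorff, and by hypothesis it is normal; by the observation recorded just after Example \ref{standard_proximity}, on a normal Hausdorff space the Stone-Cech proximity and the standard proximity coincide, so for all $A_{\alpha},B_{\alpha}\subseteq X_{\alpha}$ we have
\[A_{\alpha}\,\delta_{\alpha}\,B_{\alpha} \iff cl_{X_{\alpha}}(A_{\alpha})\cap cl_{X_{\alpha}}(B_{\alpha})\neq\emptyset.\]
Next, recalling from the earlier proposition that the coproduct topology on $\coprod_{\alpha}X_{\alpha}$ is the disjoint union topology, each $X_{\alpha}$ is clopen, so $\overline{A}=\bigcup_{\alpha}cl_{X_{\alpha}}(A_{\alpha})$ and in particular $cl_{X_{\alpha}}(A_{\alpha})\subseteq\overline{A}$ for every $\alpha$.

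With these two facts in hand, item (2) (which contains (1) as its forward implication) follows by checking both directions directly. If $A\,\delta\,B$, then by Definition \ref{coproduct_definition} there is some $\alpha$ with $A_{\alpha}\,\delta_{\alpha}\,B_{\alpha}$, hence $cl_{X_{\alpha}}(A_{\alpha})\cap cl_{X_{\alpha}}(B_{\alpha})\neq\emptyset$, and since this set lies in $\overline{A}\cap\overline{B}$ we conclude $\overline{A}\cap\overline{B}\neq\emptyset$; this is precisely the equinormality asserted in (1). Conversely, if $\overline{A}\cap\overline{B}\neq\emptyset$, pick a point $x$ in this intersection; it lies in a unique $X_{\alpha}$, where $x\in cl_{X_{\alpha}}(A_{\alpha})\cap cl_{X_{\alpha}}(B_{\alpha})$, so $A_{\alpha}\,\delta_{\alpha}\,B_{\alpha}$ by the displayed equivalence, and therefore $A\,\delta\,B$. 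This gives $A\,\delta\,B\iff\overline{A}\cap\overline{B}\neq\emptyset$, which is (2).

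For (3), let $f:\coprod_{\alpha}X_{\alpha}\to\mathbb{R}$ be continuous and suppose $A\,\delta\,B$. By (2) there is a point $x\in\overline{A}\cap\overline{B}$; continuity of $f$ gives $f(x)\in f(\overline{A})\subseteq\overline{f(A)}$ and likewise $f(x)\in\overline{f(B)}$, so $\overline{f(A)}\cap\overline{f(B)}\neq\emptyset$ in $\mathbb{R}$ (and in particular $d(f(A),f(B))=0$). Hence $f(A)$ and $f(B)$ are close in both the standard and the metric proximity on $\mathbb{R}$, so $f$ is a proximity map. The whole argument is bookkeeping, so there is no real obstacle; the only points requiring care are the first step — correctly invoking normality together with the separatedness hypothesis to replace each $\delta_{\alpha}$ by the standard proximity — and, in (3), noticing that uniform continuity is not needed here precisely because equinormality makes $\delta$ coarse enough that plain continuity already preserves closeness.
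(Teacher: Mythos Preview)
Your argument is correct, but it follows a genuinely different route from the paper's. The paper treats this result as a corollary of Theorem \ref{theorem2}: since each $\delta_{\alpha}$ is the Stone-Cech proximity, the Smirnov compactification of the coproduct is its Stone-Cech compactification; since a disjoint union of normal spaces is normal, an external result (Corollary 7.23 in \cite{proximityspaces}) then yields equinormality, and (2) and (3) are read off from further cited facts about equinormal proximities (Lemma 2.8 and Theorem 7.22 in \cite{proximityspaces}). Your proof, by contrast, never invokes Theorem \ref{theorem2} or the general machinery of equinormality at all: you reduce everything to the single observation that on each normal Hausdorff $X_{\alpha}$ the Stone-Cech proximity coincides with the standard proximity, and then compute closures in the disjoint union directly. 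This is more elementary and entirely self-contained within the paper, at the cost of not displaying the result as a consequence of the section's main theorem; the paper's version, on the other hand, situates the corollary within the broader theory and makes clear that (1) and (3) are really instances of the general equivalence between equinormality and ``continuous $\Rightarrow$ proximity map'' on normal spaces.
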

\begin{proof}
By Theorem \ref{theorem2}, the Smirnov compactification of the coproduct is the Stone-Cech compactification. Since the topology on each $X_{\alpha}$ is normal, so is the topology on the coproduct. Consequently, by Corollary 7.23 in \cite{proximityspaces}, $\delta$ is equinormal. This shows $(1).$ To see $(2),$ notice that 
\[\overline{A} \cap \overline{B} \neq \emptyset \implies \overline{A} \delta \overline{B} \iff A\delta B,\]
where the first implication follows from the definition of a proximity and the second equivalence is the statement of Lemma 2.8 in \cite{proximityspaces}. Notice that if the proximity is equinormal, then the first implication is an equivalence. Since $\delta$ is equinormal, the conclusion follows. Finally, $(3)$ is equivalent to being equinormal on normal separated proximity spaces by Theorem 7.22 in \cite{proximityspaces}.
\end{proof}

For more on equinormal proximity spaces, the reader is referred to \cite{proximityspaces} and \cite{Pervin}. 

\subsection{Metrizability and proximity weight}\label{metrizability}

The following theorem shows that the boundary of the Smirnov compactification of an infinite coproduct always contains the Stone-Cech corona of the naturals. The proof resembles the technique used to show that metric spaces which are not totally bounded have a non-metrizable minimal uniform compactification (see for example Theorem 3.3 in \cite{woods}). To understand the statement of the proposition, recall that $\omega$ denotes a countably infinite discrete space and $\beta \omega$ denotes the Stone-Cech compactification of $\omega.$

\begin{theorem}\label{theorem123}
Let $(X_{\alpha}, \delta_{\alpha})_{\alpha}$ be an infinite collection of separated proximity spaces, $(\coprod_{\alpha} X_{\alpha}, \delta)$ be the associated coproduct, and $(\overline{\coprod_{\alpha} X_{\alpha}}, \delta^*)$ be the associated Smirnov compactification. Then  $(\overline{\coprod_{\alpha} X_{\alpha}})\setminus (\coprod_{\alpha} X_{\alpha})$ contains a copy of $\beta \omega \setminus \omega.$ In particular, the topology on $\overline{\coprod_{\alpha} X_{\alpha}}$ is not metrizable. Consequently, the proximity on $\overline{\coprod_{\alpha} X_{\alpha}}$ is not a metric proximity for any metric. 
\end{theorem}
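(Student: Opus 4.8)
The plan is to find, inside the coproduct, a countable closed subspace on which the restricted proximity is the \emph{discrete} proximity, and then to identify its closure in $\overline{\coprod_\alpha X_\alpha}$ with $\beta\omega$. Since the index set is infinite, I would fix a sequence $\alpha_1,\alpha_2,\dots$ of pairwise distinct indices, choose one point $x_n\in X_{\alpha_n}$ for each $n$, and set $D:=\{x_n\mid n\in\mathbb N\}\subseteq\coprod_\alpha X_\alpha$. In the disjoint-union topology (which is the topology induced by $\delta$) the set $D$ is closed --- each summand is Hausdorff because each $\delta_\alpha$ is separated --- and discrete, so $D\cong\omega$. The key point is that the subspace proximity on $D$ is discrete: if $S,T\subseteq D$ are disjoint, then since $D$ meets each summand in at most one point, for every $\alpha$ at least one of $S_\alpha,T_\alpha$ is empty, hence $S_\alpha{\centernot{\delta}}_\alpha T_\alpha$ (close sets are nonempty, by axiom 2), hence $S{\centernot{\delta}}T$ by Definition \ref{coproduct_definition}. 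The requirement that the $\alpha_n$ be pairwise distinct is exactly what makes this work, and is the one subtlety I would flag in the write-up.

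Next I would set $K:=\operatorname{cl}_{\overline{\coprod_\alpha X_\alpha}}(D)$; this is a compact Hausdorff compactification of $D\cong\omega$ (a closed subspace of a compact Hausdorff space in which $D$ is dense, and $D$ is infinite discrete, so $K\supsetneq D$). For disjoint $S,T\subseteq D$ we have $S{\centernot{\delta}}T$, hence $S{\centernot{\delta}}^*T$ because $\coprod_\alpha X_\alpha$ proximally \emph{embeds} in its Smirnov compactification (Theorem \ref{theorem11}), so the implication $S\delta^*T\Rightarrow S\delta T$ is available; and since in a compact Hausdorff space two sets are close iff their closures meet, we conclude that the closures of $S$ and $T$ in $\overline{\coprod_\alpha X_\alpha}$ (equivalently, in $K$) are disjoint. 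Thus $K$ is a compactification of the discrete space $\omega$ in which disjoint subsets have disjoint closures, and by the classical characterization of the Stone--\v{C}ech compactification this forces $K\cong\beta\omega$, with $D$ corresponding to $\omega$. Finally, since $D$ is closed in $\coprod_\alpha X_\alpha$ and $\coprod_\alpha X_\alpha$ is dense in $\overline{\coprod_\alpha X_\alpha}$, we get $K\cap\coprod_\alpha X_\alpha=\operatorname{cl}_{\coprod_\alpha X_\alpha}(D)=D$, so $K\setminus D\subseteq(\overline{\coprod_\alpha X_\alpha})\setminus(\coprod_\alpha X_\alpha)$ is a copy of $\beta\omega\setminus\omega$ sitting in the corona, which is the main assertion.

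The two corollaries are then formal. It is well known that $\beta\omega\setminus\omega$ is not metrizable (for instance it is compact of cardinality $2^{\mathfrak c}$, while a compact metric space has cardinality at most $\mathfrak c$); since metrizability is hereditary and $\overline{\coprod_\alpha X_\alpha}$ contains a homeomorphic copy of $\beta\omega\setminus\omega$, the space $\overline{\coprod_\alpha X_\alpha}$ is not metrizable. And if $\delta^*$ coincided with the metric proximity of some metric $\rho$ on $\overline{\coprod_\alpha X_\alpha}$, then the topology induced by $\delta^*$ --- which is the given topology of $\overline{\coprod_\alpha X_\alpha}$ --- would be $\rho$-metrizable, contradicting the previous sentence; hence $\delta^*$ is not a metric proximity for any metric.

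The step I expect to be the crux is the identification $K\cong\beta\omega$: it hinges on (i) translating ``$S$ is far from $T$ in the coproduct'' into ``their closures are disjoint in the Smirnov compactification'', which needs $X\hookrightarrow\mathfrak X$ to be a genuine proximity embedding so that $S\delta^*T\Rightarrow S\delta T$, and (ii) the fact that a compactification of a discrete space which separates disjoint subsets by disjoint closures must be $\beta\omega$. As an alternative to (ii), one could instead combine Propositions \ref{generating_cluster} and \ref{cluster_in_a_subspace} to show directly that $K$ is the Smirnov compactification of $D$ with its (discrete) subspace proximity, and then note that the Smirnov compactification of $\omega$ with the discrete proximity is $\beta\omega$; this route is closer to the cluster-based arguments already used in the paper.
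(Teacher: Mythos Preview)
Your proposal is correct and follows essentially the same route as the paper: pick one point from each of countably many distinct summands to form $D$, observe that disjoint subsets of $D$ are $\delta$-far and hence have disjoint closures in the Smirnov compactification, conclude $\operatorname{cl}(D)\cong\beta\omega$, and use closedness of $D$ in the coproduct to place $\beta\omega\setminus\omega$ in the corona. The paper phrases the identification of $\operatorname{cl}(D)$ with $\beta\omega$ via your ``alternative'' route (identifying $\operatorname{cl}(D)$ with the Smirnov compactification of $D$ in its subspace proximity) and then cites the Gillman--Jerison characterization, while you invoke the disjoint-closures characterization directly; these are the same argument packaged slightly differently.
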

\begin{proof}
Since $(X_{\alpha}, \delta_{\alpha})_{\alpha}$ is an infinite collection, it contains a countably infinite subset. Choose one point from each element in that countably infinite subset. Denote that collection of points by $D$. Then $D \subseteq \coprod_{\alpha} X_{\alpha}.$ For any two disjoint subsets $A$ and $B$ of $D$ we have that $cl_{\overline{\coprod_{\alpha} X_{\alpha}}}(A) \cap cl_{\overline{\coprod_{\alpha} X_{\alpha}}}(A) = \emptyset$ (because if their closures in $\overline{\coprod_{\alpha} X_{\alpha}}$ would intersect, then $A \delta B$, which is not true). Since the closure of $D$ in $\overline{\coprod_{\alpha} X_{\alpha}}$ can be identified with the Smirnov compactification of $D$ (when $D$ is given the subspace proximity inherited from $\coprod_{\alpha} X_{\alpha}$), we see that any two disjoint subsets of $D$ have disjoint closures in the Smirnov compactification of $D$. Thus, by Theorem 6.5 in \cite{Rings}, we know that the closure of $D$ in $\overline{\coprod_{\alpha} X_{\alpha}}$ (i.e., the Smirnov compactification of $D$) is homeomorphic to the Stone-Cech compactification of $D,$ i.e.,
\[cl_{\overline{\coprod_{\alpha} X_{\alpha}}}(D) \cong \mathfrak{D} \cong  \beta \omega,\]
where $\cong$ means homeomorphic, and $\mathfrak{D}$ denotes the Smirnov compactification of $D.$ Since $D$ is closed in $\coprod_{\alpha} X_{\alpha},$ we have that
\[(\mathfrak{D} \setminus D) \subseteq (\overline{\coprod_{\alpha} X_{\alpha}}\setminus \coprod_{\alpha} X_{\alpha}).\]
Thus,
\[\beta \omega \setminus \omega \cong (\mathfrak{D} \setminus D) \subseteq (\overline{\coprod_{\alpha} X_{\alpha}}\setminus \coprod_{\alpha} X_{\alpha}). \]
In particular, since $\beta \omega \setminus \omega$ is not metrizable, neither is $\overline{\coprod_{\alpha} X_{\alpha}}.$ Thus, the proximity $\delta^*$ on $\overline{\coprod_{\alpha} X_{\alpha}}$ is not a metric proximity (otherwise, that metric would induce the same topology as $\delta$ on $\overline{\coprod_{\alpha} X_{\alpha}},$ and thus the topology on $\overline{\coprod_{\alpha} X_{\alpha}}$ would be metrizable).
\end{proof}

To understand the following corollary, recall that the \textbf{proximity weight} of a proximity space $(X, \delta)$ is the smallest cardinal number $\kappa$ such that $X$ has a proximity base $\mathcal{P}$ with $|\mathcal{P}|\leq \kappa,$ where by the \textbf{proximity base} we mean a collection $\mathcal{P}$ of subsets of $X$ such that $A {\centernot{\delta}}B$ implies that there are $C,D \subseteq \mathcal{P}$ such that $A\subseteq C, B \subseteq D,$ and $C{\centernot{\delta}}D.$

\begin{corollary}\label{corollary123}
Let $(X_{\alpha}, \delta_{\alpha})_{\alpha}$ be an infinite collection of separated proximity spaces, $(\coprod_{\alpha} X_{\alpha}, \delta)$ be the associated coproduct, and $(\overline{\coprod_{\alpha} X_{\alpha}}, \delta^*)$ be the associated Smirnov compactification. Then the proximity weights of $(\coprod_{\alpha} X_{\alpha}, \delta)$ and $(\overline{\coprod_{\alpha} X_{\alpha}}, \delta^*)$ are strictly larger than the cardinality of the natural numbers.
\end{corollary}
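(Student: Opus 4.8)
The plan is to reduce both statements to a single computation --- that a countably infinite \emph{discrete} proximity space has proximity weight $2^{\aleph_0}$, hence strictly greater than $\aleph_0$ --- via the elementary observation that proximity weight does not increase when we pass to a proximity subspace. Concretely, if $\mathcal{P}$ is a proximity base for $(X,\delta)$ and $Y\subseteq X$, then $\{C\cap Y \mid C\in\mathcal{P}\}$ is a proximity base for the subspace $(Y,\delta_Y)$: given $A,B\subseteq Y$ with $A\centernot{\delta_Y}B$ we have $A\centernot\delta B$, so there are $C,C'\in\mathcal{P}$ with $A\subseteq C$, $B\subseteq C'$, $C\centernot\delta C'$, and then $A\subseteq C\cap Y$, $B\subseteq C'\cap Y$, and $(C\cap Y)\centernot\delta(C'\cap Y)$ since subsets of $\centernot\delta$-far sets are $\centernot\delta$-far (an immediate consequence of axiom (4)). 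Hence the proximity weight of $Y$ is at most that of $X$.

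Next I would exhibit a suitable subspace inside both $\coprod_\alpha X_\alpha$ and $\overline{\coprod_\alpha X_\alpha}$, exactly as in the proof of Theorem \ref{theorem123}: since the index family is infinite, choose a countably infinite subfamily and one point from each of its members, obtaining a countably infinite set $D\subseteq\coprod_\alpha X_\alpha$. If $A,B\subseteq D$ are disjoint then for every $\alpha$ at least one of $A_\alpha,B_\alpha$ is empty (because $D\cap X_\alpha$ has at most one point), so $A_\alpha\centernot{\delta_\alpha}B_\alpha$ by axiom (2), whence $A\centernot\delta B$. Thus the subspace proximity $\delta_D$ is the discrete proximity; and since $\coprod_\alpha X_\alpha$ proximally embeds in $\overline{\coprod_\alpha X_\alpha}$, the proximity $D$ inherits from $\overline{\coprod_\alpha X_\alpha}$ is the discrete one as well. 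It therefore remains only to show that a countably infinite set, call it $\omega$, equipped with the discrete proximity $\delta_{\mathrm{disc}}$ has proximity weight $>\aleph_0$.

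That is the heart of the matter, though the argument is short: I claim every proximity base $\mathcal{P}$ for $(\omega,\delta_{\mathrm{disc}})$ is the full power set of $\omega$. Fix $S\subseteq\omega$. Since $S\cap(\omega\setminus S)=\emptyset$ we have $S\centernot{\delta_{\mathrm{disc}}}(\omega\setminus S)$, so there are $C,C'\in\mathcal{P}$ with $S\subseteq C$, $\omega\setminus S\subseteq C'$, and $C\cap C'=\emptyset$; then $C\subseteq\omega\setminus C'\subseteq\omega\setminus(\omega\setminus S)=S$, forcing $C=S$, so $S\in\mathcal{P}$. Hence $|\mathcal{P}|\geq 2^{\aleph_0}>\aleph_0$. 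Applying the subspace-monotonicity lemma with $Y=D$ first inside $X=\coprod_\alpha X_\alpha$ and then inside $X=\overline{\coprod_\alpha X_\alpha}$ now gives the corollary. The only (mild) obstacle is to resist trying to compare the proximity weight with the topological weight of the Smirnov compactification directly, and instead isolate the discrete proximity on $\omega$, where any proximity base is forced to contain \emph{every} subset.
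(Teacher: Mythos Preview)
Your argument is correct and takes a genuinely different route from the paper's. The paper proceeds indirectly: it invokes Theorem~8.14 of \cite{proximityspaces} to say that the proximity weight of the coproduct equals that of its Smirnov compactification, then Theorem~8.19 of \cite{proximityspaces} to say that countable proximity weight would force the compactification to be metrizable, and finally appeals to Theorem~\ref{theorem123} (non-metrizability) for the contradiction. Your proof is more direct and self-contained: you prove from scratch that proximity weight is monotone under passage to subspaces, locate the same countable discrete subspace $D$ used in Theorem~\ref{theorem123}, and compute explicitly that the discrete proximity on a countably infinite set forces every proximity base to be the entire power set. This avoids both cited theorems and, as a bonus, yields the sharper bound $\geq 2^{\aleph_0}$ rather than merely $>\aleph_0$. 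The paper's approach is shorter if one is willing to import the metrizability criterion, and ties the corollary more visibly to the preceding theorem; yours is more elementary and gives quantitatively more.
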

\begin{proof}
First recall that by Theorem 8.14 in \cite{proximityspaces}, given a proximity space $X$ and its Smirnov compactification $\mathfrak{X},$ the proximity weights of $X$ and $\mathfrak{X}$ agree. Thus, it is enough to show that the proximity weight of $(\overline{\coprod_{\alpha} X_{\alpha}}, \delta^*)$ is larger than the cardinality of the natural numbers. However, by Theorem 8.19 in \cite{proximityspaces}, if $(\overline{\coprod_{\alpha} X_{\alpha}}, \delta^*)$ had the proximity weight equal to natural numbers, then $(\overline{\coprod_{\alpha} X_{\alpha}}, \delta^*)$ would be metrizable (even totally bounded), which is a contradiction by Theorem \ref{theorem123}.
\end{proof}

\begin{corollary}
Let $(X_{\alpha}, \delta_{\alpha})_{\alpha}$ be an infinite collection of separated proximity spaces and $(\coprod_{\alpha} X_{\alpha}, \delta)$ be the associated coproduct. Then $\delta$ is not a metric proximity for any totally bounded metric on $\coprod_{\alpha} X_{\alpha}.$
\end{corollary}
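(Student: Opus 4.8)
The plan is to derive a contradiction directly from Corollary \ref{corollary123}. Suppose, for contradiction, that $\delta$ is the metric proximity associated to some totally bounded metric $d$ on $\coprod_{\alpha} X_{\alpha}$. Recall the characterization (Theorem 8.19 in \cite{proximityspaces}, already invoked in the proof of Corollary \ref{corollary123}) that a separated proximity space is the metric proximity of some totally bounded metric precisely when its proximity weight is at most $\aleph_0$. Hence under our assumption $(\coprod_{\alpha} X_{\alpha}, \delta)$ would have proximity weight $\leq \aleph_0$, which contradicts Corollary \ref{corollary123}. Therefore no such $d$ exists.

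Alternatively, one can argue through the Smirnov compactification itself, avoiding the proximity-weight machinery. If $d$ is a totally bounded metric on $\coprod_{\alpha} X_{\alpha}$, then its completion $(\widehat{\coprod_{\alpha} X_{\alpha}}, \widehat{d}\,)$ is complete and totally bounded, hence compact, and of course metrizable; moreover $\coprod_{\alpha} X_{\alpha}$ sits inside it as a dense subspace whose metric proximity is exactly ``the closures in the completion intersect'' (for a totally bounded metric, $d(A,B)=0$ iff the closures of $A$ and $B$ in the completion meet). By the uniqueness clause of Theorem \ref{theorem11}, $\overline{\coprod_{\alpha} X_{\alpha}}$ would then be proximally isomorphic, hence homeomorphic, to this completion, and so would be metrizable — contradicting Theorem \ref{theorem123}.

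There is essentially no obstacle here: the result is an immediate consequence of the non-metrizability (equivalently, the uncountable proximity weight) of $\overline{\coprod_{\alpha} X_{\alpha}}$ established above. The only point requiring care is invoking the correct fact — either that countable proximity weight characterizes totally bounded metric proximities, or equivalently that the Smirnov compactification of a totally bounded metric space is its metric completion — after which the contradiction is automatic.
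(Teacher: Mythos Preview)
Your first argument is exactly the paper's proof: assume $\delta$ is the metric proximity of a totally bounded metric, invoke Theorem~8.19 of \cite{proximityspaces} to conclude the proximity weight is countable, and contradict Corollary~\ref{corollary123}. Your alternative route via the metric completion is also correct and, while not the paper's approach, is a perfectly good variant that appeals directly to Theorem~\ref{theorem123} instead of the proximity-weight corollary.
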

\begin{proof}
If $\delta$ was a metric proximity for some totally bounded metric $d$ on $\coprod_{\alpha} X_{\alpha},$ then $d$ and $\delta$ would induce the same topology on $\coprod_{\alpha} X_{\alpha}.$ But by theorem 8.19 in \cite{proximityspaces}, this would mean that the proximity weight of $(\coprod_{\alpha} X_{\alpha}, \delta)$ equals the cardinality of the natural numbers, which contradicts Corollary \ref{corollary123}.
\end{proof}

\subsection{Connectedness of the boundary}\label{connectedness}

The following theorem shows that the boundary of the Smirnov compactification of a coproduct is an example of a highly disconnected compact space.

\begin{theorem}\label{connectedness_theorem}
Let $(X_{\alpha}, \delta_{\alpha})_{\alpha}$ be a collection of separated proximity spaces, $(\coprod_{\alpha} X_{\alpha}, \delta)$ be the associated coproduct, and $(\overline{\coprod_{\alpha} X_{\alpha}}, \delta^*)$ be the associated Smirnov compactification. Then $(\overline{\coprod_{\alpha} X_{\alpha}})\setminus (\coprod_{\alpha} X_{\alpha})$ has at least as many 
subsets that are simultaneously open and closed as there are elements in the index set. In particular, $(\overline{\coprod_{\alpha} X_{\alpha}})\setminus (\coprod_{\alpha} X_{\alpha})$ has at least as many connected components as there are elements in the index set.
\end{theorem}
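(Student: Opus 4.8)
The plan is to attach to each index $\alpha$ a clopen piece of the corona $\overline{\coprod_\alpha X_\alpha}\setminus\coprod_\alpha X_\alpha$, to verify that these pieces are pairwise disjoint, and then to pass to unions and to connected components. Throughout, $cl(\,\cdot\,)$ denotes closure in $\overline{\coprod_\beta X_\beta}$. Writing $Y_\alpha:=\coprod_{\beta\neq\alpha}X_\beta$, the first observation is that $X_\alpha$ and $Y_\alpha$ are disjoint with union $\coprod_\beta X_\beta$, and that $X_\alpha{\centernot{\delta}}Y_\alpha$: for every $\gamma$ at least one of the $\gamma$-traces $(X_\alpha)_\gamma$, $(Y_\alpha)_\gamma$ is empty, so Definition \ref{coproduct_definition} (via axiom $2$) forbids closeness.

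Next I would move into the Smirnov compactification. Since $\coprod_\beta X_\beta$ sits densely in the compact Hausdorff space $\overline{\coprod_\beta X_\beta}$ with compatible proximity $\delta$, two subsets of $\coprod_\beta X_\beta$ are $\delta$-close exactly when their closures in $\overline{\coprod_\beta X_\beta}$ meet (as recorded in the preliminaries). Hence $C_\alpha:=cl(X_\alpha)$ is disjoint from $cl(Y_\alpha)$, while $C_\alpha\cup cl(Y_\alpha)=cl(X_\alpha\cup Y_\alpha)=cl(\coprod_\beta X_\beta)=\overline{\coprod_\beta X_\beta}$ by density; two disjoint closed sets covering the space are each clopen, so $C_\alpha$ is clopen in $\overline{\coprod_\beta X_\beta}$ (and, under the embedding of Theorem \ref{theorem1}, $C_\alpha$ is precisely $\mathfrak{X}_\alpha$). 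Because $X_\alpha$ is closed in $\coprod_\beta X_\beta$ we get $C_\alpha\cap\coprod_\beta X_\beta=X_\alpha$, so $E_\alpha:=C_\alpha\setminus X_\alpha$ is $C_\alpha$ intersected with the corona, hence clopen in the corona, and equals the corona $\mathfrak{X}_\alpha\setminus X_\alpha$ of $X_\alpha$. Finally, for $\alpha\neq\beta$ we have $C_\beta\subseteq cl(Y_\alpha)$, which is disjoint from $C_\alpha$; thus $\{E_\alpha\}_{\alpha}$ is a pairwise disjoint family of clopen subsets of the corona.

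To conclude, if every $E_\alpha$ is nonempty then $J\mapsto\bigcup_{\alpha\in J}E_\alpha$ injects the subsets of the index set into the clopen subsets of the corona, and distinct nonempty $E_\alpha$'s each contain a connected component of the corona, which are then pairwise distinct; this already yields $2^{|I|}$ clopen sets and at least $|I|$ components. The one genuine obstacle is an index with $E_\alpha=\emptyset$, i.e.\ with $X_\alpha$ compact, which contributes nothing by itself. I would dispose of these exactly as in the proof of Theorem \ref{theorem123}: let $I_c$ be the set of indices with $X_\alpha$ compact, pick $d_\alpha\in X_\alpha$ for each such $\alpha$; then $D=\{d_\alpha:\alpha\in I_c\}$ is closed and discrete in $\coprod_\beta X_\beta$, so $cl(D)\cong\beta D$, and the subspace $\beta D\setminus D$ of the corona already carries $2^{|I_c|}$ clopen subsets (distinct subsets of $D$ modulo finite sets give distinct traces on $\beta D\setminus D$). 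Since for an infinite index set its cardinality is the larger of $|I_c|$ and $|\{\alpha:X_\alpha\text{ noncompact}\}|$, merging the two families produces at least $|I|$ clopen subsets, hence at least $|I|$ connected components, of the corona. Apart from this reduction, which is the only delicate point, the argument is routine bookkeeping with closures in a compact Hausdorff space.
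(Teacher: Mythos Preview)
Your core argument---defining $E_\alpha$ as the trace of $cl(X_\alpha)$ on the corona and showing these are pairwise disjoint clopen subsets---is the same as the paper's (where $E_\alpha$ is written $X_\alpha^*$). Your proof that $C_\alpha=cl(X_\alpha)$ is already clopen in the full compactification, via the partition $cl(X_\alpha)\sqcup cl(Y_\alpha)=\overline{\coprod_\beta X_\beta}$, is in fact cleaner than the paper's direct proximity computation that $X_\alpha^*$ is open in the corona.

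Two of your supplementary claims, however, do not go through as written. First, the map $J\mapsto\bigcup_{\alpha\in J}E_\alpha$ need not produce clopen sets: an infinite union of clopen sets is open but generally not closed, and indeed $\bigcup_{\alpha\in I}C_\alpha=\coprod_\alpha\mathfrak{X}_\alpha$ is, by Theorem~\ref{theorem1}, a proper dense subset of the compactification whenever $I$ is infinite. This overreach is harmless for the theorem itself, since the individual nonempty $E_\alpha$'s already furnish $|I|$ distinct clopen sets in that case. Second---and this is the genuine gap---in your treatment of the compact indices, clopen subsets of the \emph{subspace} $\beta D\setminus D$ are not automatically clopen in the ambient corona, and you give no argument that $\beta D\setminus D$ is itself clopen there (for instance, with $X_n=[0,1]$ and $d_n=0$ the set $D$ is $\delta$-close to its complement, so $cl(D)$ is certainly not clopen in the compactification). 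The easy repair is to replace traces of subsets of $D$ by traces of $cl(X_J)$ for $J\subseteq I$: your own partition argument shows $cl(X_J)\cap(\text{corona})$ is clopen in the corona for every $J$, it is nonempty whenever $J$ is infinite, and infinite subsets $J$ differing by an infinite set yield distinct such traces. It is worth noting that the paper's own proof simply ignores the possibility that some $X_\alpha^*$ are empty; your instinct to address this edge case was correct, only the execution needs this adjustment.
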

\begin{proof}
For any $\alpha,$ let $X_{\alpha}^*$ denote the intersection of the closure of $X_{\alpha}$ in $\overline{\coprod_{\alpha} X_{\alpha}}$ with the boundary of the Smirnov compactification of the coproduct, i.e.,
\[X_{\alpha}^*:=\Big( cl_{\overline{\coprod_{\alpha} X_{\alpha}}}(X_{\alpha})\Big) \cap \Big((\overline{\coprod_{\alpha} X_{\alpha}})\setminus (\coprod_{\alpha} X_{\alpha})\Big).\]
Clearly $X_{\alpha}^*$ is closed in $(\overline{\coprod_{\alpha} X_{\alpha}})\setminus (\coprod_{\alpha} X_{\alpha}),$ being the intersection of a closed set in $\overline{\coprod_{\alpha}X_{\alpha}}$ with $(\overline{\coprod_{\alpha} X_{\alpha}})\setminus (\coprod_{\alpha} X_{\alpha}).$ Notice that for $\alpha \neq \beta,$ we have that $X_{\alpha}^* \cap Y_{\beta}^* = \emptyset,$ since otherwise \[cl_{\overline{\coprod_{\alpha} X_{\alpha}}} (X_{\alpha}) \cap cl_{\overline{\coprod_{\alpha} X_{\alpha}}} (X_{\beta}) \neq \emptyset, \]
implying that $X_{\alpha} \delta X_{\beta},$ a contradiction. Let us show that for any $\alpha$ we have that $X_{\alpha}^*$ is also open in $(\overline{\coprod_{\alpha} X_{\alpha}})\setminus (\coprod_{\alpha} X_{\alpha})$. Let $\alpha$ be arbitrary. By Corollary 2.4 in \cite{proximityspaces}, it is enough to show that for any $\sigma \in X_{\alpha}^*,$ we have that
\begin{equation}\label{important_relation}
\sigma{\centernot{\delta}}^*\Big(\big((\overline{\coprod_{\alpha} X_{\alpha}})\setminus (\coprod_{\alpha} X_{\alpha})\big)\setminus X_{\alpha}^*\Big)\tag{$*$}.
\end{equation}
Let $\sigma \in X_{\alpha}^*$ be arbitrary. To show (\ref{important_relation}), we need to find two subsets of $\coprod_{\alpha} X_{\alpha}$ such that one of them absorbs $\sigma,$ the other one absorbs $ \Big(\big((\overline{\coprod_{\alpha} X_{\alpha}})\setminus (\coprod_{\alpha} X_{\alpha})\big)\setminus X_{\alpha}^*\Big),$ but the two subsets are not $\delta$-close.
Notice that 
\begin{enumerate}[(1)]
\item $\sigma \in X_{\alpha}^*$ implies that $X_{\alpha} \in \sigma$ (i.e., $X_{\alpha}$ absorbs $\sigma$),
\item $\Big( (\coprod_{\alpha} X_{\alpha})\setminus X_{\alpha}\Big)$ absorbs $\Big(\big((\overline{\coprod_{\alpha} X_{\alpha}})\setminus (\coprod_{\alpha} X_{\alpha})\big)\setminus X_{\alpha}^*\Big).$ 
\end{enumerate}
(1) follows from the fact that given a subset $C$ of a proximity space $X,$ the closure of $C$ in the Smirnov compactification of $X$ consists of all the clusters in the Smirnov compactification of $X$ than contain $C$. To see (2), we need to show that for an arbitrary cluster $\sigma_1 \in \Big(\big((\overline{\coprod_{\alpha} X_{\alpha}})\setminus (\coprod_{\alpha} X_{\alpha})\big)\setminus X_{\alpha}^*\Big)$ we have that $\Big( (\coprod_{\alpha} X_{\alpha})\setminus X_{\alpha}\Big) \in \sigma_1.$ Notice that $X_{\alpha} \notin \sigma_1,$ for if $X_{\alpha} \in \sigma_1,$ then $\sigma_1 \in cl_{\overline{\coprod_{\alpha} X_{\alpha}}}(X_{\alpha}),$ and since $\sigma_1 \notin \coprod_{\alpha} X_{\alpha},$ this shows that $\sigma_1 \in X_{\alpha}^*,$ a contradiction to $\sigma_1 \in \Big(\big((\overline{\coprod_{\alpha} X_{\alpha}})\setminus (\coprod_{\alpha} X_{\alpha})\big)\setminus X_{\alpha}^*\Big).$ Thus, since $X_{\alpha} \notin \sigma_1,$ it has to be that $\Big( (\coprod_{\alpha} X_{\alpha})\setminus X_{\alpha}\Big) \in \sigma_1$ (this follows from a property of clusters mentioned right before Proposition \ref{generating_cluster}). Therefore, (1) and (2) hold, i.e., $X_{\alpha}$ absorbs $\sigma$ and $\Big( (\coprod_{\alpha} X_{\alpha})\setminus X_{\alpha}\Big)$ absorbs $\Big(\big((\overline{\coprod_{\alpha} X_{\alpha}})\setminus (\coprod_{\alpha} X_{\alpha})\big)\setminus X_{\alpha}^*\Big).$ Also notice that $X_{\alpha} {\centernot{\delta}}\Big( (\coprod_{\alpha} X_{\alpha})\setminus X_{\alpha}\Big)$ by the definition of $\delta.$ This finishes the proof of (\ref{important_relation}). Consequently,
$X_{\alpha}^*$ is open in $(\overline{\coprod_{\alpha} X_{\alpha}})\setminus (\coprod_{\alpha} X_{\alpha}).$ In conclusion, for any $\alpha$ we have that $X_{\alpha}^*$ is open and closed in $(\overline{\coprod_{\alpha} X_{\alpha}})\setminus (\coprod_{\alpha} X_{\alpha}),$ and $X_{\alpha}^* \cap X_{\beta}^* = \emptyset$ whenever $\alpha \neq \beta.$ Thus, $(\overline{\coprod_{\alpha} X_{\alpha}})\setminus (\coprod_{\alpha} X_{\alpha})$ has at least as many 
subsets that are simultaneously open and closed as there are elements in the index set. Since any susbet that is both open and closed is a union of connected components, $(\overline{\coprod_{\alpha} X_{\alpha}})\setminus (\coprod_{\alpha} X_{\alpha})$ has at least as many connected components as there are elements in the index set.
\end{proof}

In fact, the proof of the above theorem shows the following.
\begin{corollary}
Let $(X_{\alpha}, \delta_{\alpha})_{\alpha}$ be a collection of separated proximity spaces, $(\coprod_{\alpha} X_{\alpha}, \delta)$ be the associated coproduct, and $(\overline{\coprod_{\alpha} X_{\alpha}}, \delta^*)$ be the associated Smirnov compactification. Then for any $\alpha,$ the trace of $X_{\alpha}$ is an open and closed subset of the boundary, i.e.,
\[X_{\alpha}^*:=\Big( cl_{\overline{\coprod_{\alpha} X_{\alpha}}}(X_{\alpha})\Big) \cap \Big((\overline{\coprod_{\alpha} X_{\alpha}})\setminus (\coprod_{\alpha} X_{\alpha})\Big)\]
is open and closed in $(\overline{\coprod_{\alpha} X_{\alpha}})\setminus (\coprod_{\alpha} X_{\alpha}).$ \qed
\end{corollary}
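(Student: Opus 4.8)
The plan is to prove the corollary by simply invoking the proof of Theorem \ref{connectedness_theorem} verbatim, since that argument was not really about the \emph{collection} of traces $\{X_\alpha^*\}_\alpha$ but about each individual trace. First I would fix an arbitrary index $\alpha$ and recall the definition of $X_\alpha^*$ as the intersection of $cl_{\overline{\coprod_{\alpha} X_{\alpha}}}(X_{\alpha})$ with the boundary $(\overline{\coprod_{\alpha} X_{\alpha}})\setminus (\coprod_{\alpha} X_{\alpha})$. Being the intersection of a closed subset of the compactification with the (subspace) boundary, $X_\alpha^*$ is automatically closed in the boundary; this needs no further comment.

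The substantive half is openness, and here I would reproduce exactly the three moves from the theorem's proof. Using Corollary 2.4 in \cite{proximityspaces}, it suffices to verify, for each cluster $\sigma \in X_\alpha^*$, the separation relation
\[\sigma{\centernot{\delta}}^*\Big(\big((\overline{\coprod_{\alpha} X_{\alpha}})\setminus (\coprod_{\alpha} X_{\alpha})\big)\setminus X_{\alpha}^*\Big).\]
To establish it I would exhibit the two absorbing sets: $X_\alpha$ absorbs $\sigma$ (because $\sigma \in cl_{\overline{\coprod_\alpha X_\alpha}}(X_\alpha)$ means $X_\alpha \in \sigma$), and $(\coprod_{\alpha} X_{\alpha})\setminus X_{\alpha}$ absorbs every boundary cluster $\sigma_1$ outside $X_\alpha^*$ (because such a $\sigma_1$ cannot contain $X_\alpha$ — else it would lie in $cl(X_\alpha)$ and hence in $X_\alpha^*$ — so by the cluster dichotomy it contains the complement $(\coprod_{\alpha} X_{\alpha})\setminus X_{\alpha}$). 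Since $X_\alpha {\centernot{\delta}} \big((\coprod_{\alpha} X_{\alpha})\setminus X_{\alpha}\big)$ directly from Definition \ref{coproduct_definition}, the required relation follows, so $X_\alpha^*$ is open in the boundary.

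There is essentially no obstacle here: the only thing to be careful about is that the theorem's argument genuinely produces a \emph{pointwise} conclusion — "for each $\alpha$, $X_\alpha^*$ is clopen" — and the passage from there to "there are at least $|I|$ clopen sets" in the theorem used disjointness, which the corollary does not need. So the corollary is a strictly weaker byproduct, and I would phrase the proof as: "Fix $\alpha$. The argument given in the proof of Theorem \ref{connectedness_theorem} shows verbatim that $X_\alpha^*$ is closed and open in $(\overline{\coprod_{\alpha} X_{\alpha}})\setminus (\coprod_{\alpha} X_{\alpha})$; since $\alpha$ was arbitrary, the claim follows." If a self-contained rendering is preferred, I would inline the three-line verification above rather than cross-referencing.
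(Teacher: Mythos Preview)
Your proposal is correct and matches the paper's approach exactly: the paper presents this corollary with no separate proof, simply noting that ``the proof of the above theorem shows the following'' and appending a \qed. Your observation that the clopen conclusion in Theorem~\ref{connectedness_theorem} is established pointwise for each $\alpha$ (with disjointness used only for the counting statement) is precisely the justification the paper leaves implicit.
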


\subsection{Dimension}\label{dimension}

Recall from \cite{Engelking} that a nonempty topological space $X$ is said to have the \textbf{covering dimension}  $dim(X)=n<\infty$ if $n$ is the smallest non-negative integer with the property that each finite open cover of $X$ has a finite open refinement of multiplicity at most $n+1.$ If no such integer exists, we say that $dim(X)=\infty.$  By the \textbf{multiplicity} of a cover, we mean the smallest number $d$ such that each $x$ in $X$ is contained in at most $d$ elements of that cover.

In \cite{proximitydimension}, Smirnov defined a similar notion of dimension for proximity spaces. In particular, a nonempty proximity space $(X,\delta)$ is said to have the \textbf{$\delta$-dimension} $\delta d (X)=n<\infty$ (also known as the \textbf{proximity dimension}) if $n$ is the smallest non-negative integer with the property that each $\delta$-covering of $X$ has a refinement that is a $\delta$-covering of multiplicity at most $n+1.$ If no such integer exists, we say that $\delta d(X)=\infty.$ By a \textbf{$\delta$-covering}, we mean a finite collection of subsets $A_1,...,A_k\subseteq X$ such that there is a collection of subsets $B_1,...,B_k\subseteq X$ with $\cup_{i\leq k}B_i=X$ and $B_i {\centernot{\delta}} (X \setminus A_i)$ (often abbreviated by $B_i\ll A_i$) for all $1\leq i \leq k.$ In particular, it easily follows that $B_i \subseteq A_i$ for all $1\leq i \leq k.$

\begin{lemma}\label{lemma14}
Let $(X,\delta)$ be a nonempty proximity space such that $\delta d (X)\leq d$ for some non-negative integer $d.$ Given a $\delta$-covering $U_1,\cdots, U_n$ of $X$ there exists a refinement $A_1,...,A_n$ (some of them possibly empty) that is a $\delta$-covering of multiplicity at most $d+1.$
\end{lemma}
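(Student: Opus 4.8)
The statement asks: given a $\delta$-covering $U_1,\dots,U_n$ of $X$ (a space with $\delta d(X)\le d$), there is a refinement $A_1,\dots,A_n$ — crucially \emph{indexed by the same set}, with $A_i\subseteq U_i$ and possibly some $A_i=\emptyset$ — that is a $\delta$-covering of multiplicity at most $d+1$. The point is that the definition of $\delta d(X)\le d$ only promises \emph{some} refinement $V_1,\dots,V_k$ of multiplicity $\le d+1$, with no control on $k$ or on how the $V_j$'s sit over the $U_i$'s; we must repackage this data into a refinement with the original index set.

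The plan is as follows. First, apply the hypothesis $\delta d(X)\le d$ to the given $\delta$-covering $U_1,\dots,U_n$ to obtain a $\delta$-covering $V_1,\dots,V_k$ of multiplicity at most $d+1$ that refines it. "Refines" means there is a function $\varphi:\{1,\dots,k\}\to\{1,\dots,n\}$ with $V_j\subseteq U_{\varphi(j)}$ for each $j$. Now define, for each $i\in\{1,\dots,n\}$,
\[
A_i:=\bigcup_{j\in\varphi^{-1}(i)}V_j,
\]
with the convention that an empty union gives $A_i=\emptyset$. Then each $A_i\subseteq U_i$, and the collection $A_1,\dots,A_n$ still covers $X$ (in the sense required) because the $V_j$'s do and every $V_j$ lands inside exactly one $A_i$. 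I next need to check two things: (a) that $A_1,\dots,A_n$ is again a $\delta$-covering, and (b) that its multiplicity is still at most $d+1$.

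For (a), I would use the witnessing sets for $V_1,\dots,V_k$ being a $\delta$-covering: there are $W_1,\dots,W_k$ with $\bigcup_j W_j=X$ and $W_j\ll V_j$ (i.e.\ $W_j\,{\centernot{\delta}}\,(X\setminus V_j)$). Set $B_i:=\bigcup_{j\in\varphi^{-1}(i)}W_j$. Then $\bigcup_i B_i=\bigcup_j W_j=X$, and I must verify $B_i\,{\centernot{\delta}}\,(X\setminus A_i)$. Since $X\setminus A_i\subseteq X\setminus V_j$ for every $j\in\varphi^{-1}(i)$, axiom (4) of a proximity (the additivity of $\delta$ in each argument, applied finitely many times on the $B_i$ side) together with $W_j\,{\centernot{\delta}}\,(X\setminus V_j)$ gives $W_j\,{\centernot{\delta}}\,(X\setminus A_i)$ for each such $j$, hence $B_i\,{\centernot{\delta}}\,(X\setminus A_i)$. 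This shows $B_i\ll A_i$, so $A_1,\dots,A_n$ is a $\delta$-covering. For (b), the multiplicity bound: if $x\in X$ lies in $A_{i_1},\dots,A_{i_m}$ with $i_1,\dots,i_m$ distinct, then for each $\ell$ there is some $j_\ell\in\varphi^{-1}(i_\ell)$ with $x\in V_{j_\ell}$; since the $i_\ell$ are distinct the $j_\ell$ are distinct, so $x$ lies in at least $m$ of the $V_j$'s, forcing $m\le d+1$ by the multiplicity of the $V$-cover. Hence the multiplicity of $A_1,\dots,A_n$ is at most $d+1$.

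I do not expect a serious obstacle here — this is essentially a bookkeeping lemma whose content is "coalescing a refinement along its index map preserves being a $\delta$-covering and does not increase multiplicity". The only mildly delicate point is to make sure the $B_i$'s (rather than the $A_i$'s) are used as the witnessing family and to invoke proximity axiom (4) in the correct direction, noting that finite unions and the relation $\ll$ interact well; one should also be slightly careful that the refinement map $\varphi$ is total, so every $V_j$ is absorbed into some $A_i$ and nothing is lost from the cover. Everything else is immediate from the definitions recalled just before the lemma.
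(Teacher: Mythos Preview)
Your proposal is correct and follows essentially the same approach as the paper: both obtain a refining $\delta$-covering $V_1,\dots,V_k$ of multiplicity $\le d+1$, coalesce the $V_j$'s into $n$ sets via a refinement map (the paper fixes the specific choice ``assign each $V_j$ to the first $U_i$ containing it'', while you allow an arbitrary $\varphi$), and then verify the multiplicity bound and the $\delta$-covering property using the witnessing sets $W_j\ll V_j$. Your direct appeal to proximity axiom~(4) for the step $B_i\ll A_i$ is exactly what the paper's cited Corollary~3.10 in \cite{proximityspaces} encapsulates.
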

\begin{proof}
Since $\delta d (X)\leq d,$ there exists a refinement $\mathcal{V}:=\{V_1,\cdots, V_m\}$ of $U_1, \cdots, U_n$ for some non-negative integer $m$ such that $V_1,\cdots, V_m$ is a $\delta$-covering of multiplicity at most $d+1.$ If $m\leq n,$ then we are done, so assume that $m>n.$ since $V_1,\cdots, V_m$ is a $\delta$-covering, there exists $\mathcal{B}:=\{B_1,...,B_m\}$ such that $\cup_{i\leq m} B_i=X$ and $B_i \ll V_i$ for all $1\leq i \leq m.$ Define inductively for all $2\leq i\leq n$
\[A_1:=\bigcup_{\{V \in \mathcal{V} \mid V \in U_1\}}V,\]
\[A_i:=\bigcup_{\{V \in \mathcal{V} \mid V \in U_i \text{ and } V \notin A_j \text{ for all } j<i\}} V.\]
Clearly $A_1,\cdots,A_n$ has multiplicity at most $d+1,$ since each $A_i$ consists of unions of elements of $\mathcal{V}$ and each element of $\mathcal{V}$ is in at most one $A_i.$ Let us show that $A_1,...,A_n$ is a $\delta$-cover, which will finish the proof. For all $1 \leq i \leq n,$ set $C_i$ to be the finite union of $B$'s associated to the $V's$ in $A_i,$ i.e
\[C_i:=\bigcup_{\{B_j \in \mathcal{B} \mid B_j\ll V_j \text{ for some } V_j \in A_i\}}B.\]
In particular, each $C_i$ is the union of as many elements of $\mathcal{B}$ as there are elements in the union that defines $A_i.$ To see that $C_1,\cdots, C_n$ covers $X,$ let $x \in X$ be arbitrary. Then $x \in B_j$ for at least one $j$ such that $1\leq j\leq m.$ Let $i$ be the smallest integer such that 
\[x\in B_k \ll V_k \subseteq U_i,\]
for some integer $k$ such that $1\leq k \leq m.$ Then clearly $V_k \subseteq A_i,$ and thus $B_k \subseteq C_i.$ Thus, $x \in C_i,$ which shows that  $C_1,\cdots, C_n$ covers $X.$ Finally, let $i$ be an  arbitrary integer such that $1\leq i \leq n$. Then
\[A_i=V_{j_1}\cup V_{j_2}\cup \cdots \cup V_{j_s}\]
\[C_i=B_{j_1}\cup B_{j_2}\cup \cdots \cup B_{j_s}\]
for some nonnegative integer $s.$ Since we have
\[B_{jl} \ll V_{jl} \text{ for all } 1\leq l \leq s,\]
by Corlollary 3.10 in \cite{proximityspaces}, we have that $C_i \ll A_i,$ as desired.
\end{proof}

\begin{theorem}\label{last_theorem}
Let $(X_{\alpha}, \delta_{\alpha})_{\alpha}$ be a collection of nonempty separated proximity spaces,  
 $(\coprod_{\alpha} X_{\alpha}, \delta)$ be the associated coproduct, and $(\overline{\coprod_{\alpha} X_{\alpha}}, \delta^*)$ be the associated Smirnov compactification. For any $\alpha,$ let $(\mathfrak{X}_{\alpha},\delta_{\alpha}^*)$ denote the Smirnov compactification of $(X_{\alpha}, \delta_{\alpha}),$ and let $(\coprod_{\alpha} \mathfrak{X}_{\alpha}, \delta_1)$ denote the coproduct of Smirnov compactifications. Then
\begin{multline*}
\delta d \left(\coprod_{\alpha} X_{\alpha}\right)= \delta_1 d \left(\coprod_{\alpha} \mathfrak{X}_{\alpha}\right) =\delta^* d \left(\overline{\coprod_{\alpha} X_{\alpha}}\right)=dim\left(\overline{\coprod_{\alpha} X_{\alpha}}\right) \\ =\sup_{\alpha} dim\left(\mathfrak{X}_{\alpha}\right)=\sup_{\alpha} \delta_{\alpha}d\left(X_{\alpha}\right)=\sup_{\alpha} \delta_{\alpha}^*d\left(\mathfrak{X}_{\alpha}\right).
\end{multline*}
\end{theorem}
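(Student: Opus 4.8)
The plan is to derive the whole chain from one new identity together with a classical theorem of Smirnov on the proximity dimension (see \cite{proximitydimension}): for every separated proximity space $(Y,\rho)$ with Smirnov compactification $(\mathfrak{Y},\rho^*)$ one has $\rho d(Y)=\dim(\mathfrak{Y})$, and in particular, when $Y$ is itself compact Hausdorff (so $\mathfrak{Y}=Y$ and $\rho^*=\rho$), this reads $\rho d(Y)=\dim(Y)$. Granting this, applying it to $(\coprod_\alpha X_\alpha,\delta)$, to $(\coprod_\alpha\mathfrak{X}_\alpha,\delta_1)$ --- whose Smirnov compactification is again $(\overline{\coprod_\alpha X_\alpha},\delta^*)$ by the corollary following Theorem \ref{theorem1} --- and to $(\overline{\coprod_\alpha X_\alpha},\delta^*)$ itself, collapses the first four terms of the chain to the single value $\dim(\overline{\coprod_\alpha X_\alpha})$; applying it to each $(X_\alpha,\delta_\alpha)$ gives $\delta_\alpha d(X_\alpha)=\delta_\alpha^* d(\mathfrak{X}_\alpha)=\dim(\mathfrak{X}_\alpha)$, and taking the supremum over $\alpha$ identifies the last three terms with one another. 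Hence everything reduces to proving the identity $\delta d(\coprod_\alpha X_\alpha)=\sup_\alpha\delta_\alpha d(X_\alpha)$, i.e.\ that the remaining two blocks of the chain agree.

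I would prove this identity directly from the definition of $\delta$-dimension, by two inequalities, writing $Z:=\coprod_\beta X_\beta$. For "$\geq$": fix $\alpha$ and a $\delta_\alpha$-covering $U_1,\dots,U_n$ of $X_\alpha$ with witnesses $B_1,\dots,B_n$; I would enlarge it to the family $U_1,\dots,U_n,Z\setminus X_\alpha$ on $Z$ with witnesses $B_1,\dots,B_n,Z\setminus X_\alpha$, and check (using the definition of $\delta$, the fact that $\delta_\alpha$ is the restriction of $\delta$, and $X_\alpha\centernot\delta(Z\setminus X_\alpha)$) that this is a $\delta$-covering of $Z$. Refining it by means of $\delta d(Z)$ and then restricting the refinement back to $X_\alpha$ --- the restriction of a $\delta$-covering of $Z$ to $X_\alpha$ is again a $\delta_\alpha$-covering of no larger multiplicity, and any member that refined $Z\setminus X_\alpha$ restricts to $\emptyset$ and may be discarded --- yields a $\delta_\alpha$-covering of $X_\alpha$ refining the $U_i$ of multiplicity at most $\delta d(Z)+1$, so $\delta_\alpha d(X_\alpha)\le\delta d(Z)$; taking the supremum over $\alpha$ gives "$\geq$". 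For "$\leq$" (we may assume $d:=\sup_\alpha\delta_\alpha d(X_\alpha)<\infty$): let $U_1,\dots,U_n$ be a $\delta$-covering of $Z$; for each $\alpha$ the family $(U_1)_\alpha,\dots,(U_n)_\alpha$ is a $\delta_\alpha$-covering of $X_\alpha$, and here \emph{Lemma \ref{lemma14} is exactly what is needed}: it furnishes a refining $\delta_\alpha$-covering $A_1^\alpha,\dots,A_n^\alpha$ of multiplicity at most $d+1$ indexed again by $\{1,\dots,n\}$. Setting $A_i:=\bigcup_\alpha A_i^\alpha$, with witnesses $C_i:=\bigcup_\alpha C_i^\alpha$ obtained by gluing the individual witnesses, one checks that $A_1,\dots,A_n$ refines $U_1,\dots,U_n$, that it is a $\delta$-covering with witnesses $C_1,\dots,C_n$, and that its multiplicity is still at most $d+1$ --- a given point of $Z$ lies in a unique $X_\gamma$ and therefore meets only the sets $A_i^\gamma$. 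Thus $\delta d(Z)\le d$.

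The main obstacle is the "$\leq$" direction, and its heart is controlling the \emph{number} of sets in the glued refinement. Invoking only the definition of $\delta_\alpha d(X_\alpha)$ would produce, for each $\alpha$, a refinement into its own number $m_\alpha$ of sets; gluing these over $\alpha$ would give $\sum_\alpha m_\alpha$ sets, which is infinite when the index set is infinite --- not a $\delta$-covering at all --- and awkward to bound in multiplicity even in the finite case. Lemma \ref{lemma14} is precisely the device that forces each piece to be refined into the \emph{same} number $n$ of (possibly empty) sets, so that the glued family has only $n$ members; the multiplicity bound then survives because distinct summands $X_\alpha$ are pairwise $\centernot\delta$, so no point of $Z$ ever lies in a set supported on a different piece. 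All remaining verifications --- that restrictions, unions, and the enlargement above of $\delta$-coverings behave as claimed --- are routine consequences of the definition $A\delta B\iff A_\alpha\delta_\alpha B_\alpha$ for some $\alpha$, together with the earlier observation that $\delta_\alpha$ is the subspace proximity induced by $\delta$.
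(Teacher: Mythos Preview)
Your proposal is correct and follows essentially the same route as the paper: both reduce the chain to a single coproduct identity via Smirnov's theorems from \cite{proximitydimension} and then establish that identity using Lemma~\ref{lemma14} for the ``$\leq$'' direction. The only cosmetic differences are that the paper proves the identity for $\coprod_\alpha\mathfrak{X}_\alpha$ rather than for $\coprod_\alpha X_\alpha$, and that for ``$\geq$'' the paper simply cites the subspace monotonicity (Theorem~2 of \cite{proximitydimension}) where you spell out the enlarge--refine--restrict argument.
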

\begin{proof}
Since $(\coprod_{\alpha} X_{\alpha}, \delta)$ proximally and densely embedds in its Smirnov compactification $(\overline{\coprod_{\alpha} X_{\alpha}}, \delta^*),$ by Theorem $3$ in \cite{proximitydimension}, we know that $\delta d \left(\coprod_{\alpha} X_{\alpha}\right)=\delta^* d\left(\overline{\coprod_{\alpha} X_{\alpha}}\right).$ Since by Theorem \ref{theorem1} we also know that $(\coprod_{\alpha} \mathfrak{X}_{\alpha}, \delta_1)$ proximally and densely embedds in $(\overline{\coprod_{\alpha} X_{\alpha}}, \delta^*),$ we have that $\delta_1 d \left(\coprod_{\alpha} \mathfrak{X}_{\alpha}\right) =\delta^* d \left(\overline{\coprod_{\alpha} X_{\alpha}}\right).$ By theorem $1$ in \cite{proximitydimension}, the $\delta$-dimension of a proximity space coincides with the topological dimension of its Smirnov extension. Thus, $\delta^* d \left(\overline{\coprod_{\alpha} X_{\alpha}}\right)=dim\left(\overline{\coprod_{\alpha} X_{\alpha}}\right).$ For the same reason, we have that $\sup_{\alpha} dim\left(\mathfrak{X}_{\alpha}\right)=\sup_{\alpha} \delta_{\alpha}d\left(X_{\alpha}\right).$ Finally, again by Theorem $3$ in \cite{proximitydimension}, we have that any proximity space has the same $\delta$-dimension as its Smirnov compactification, and consequently $\sup_{\alpha} \delta_{\alpha}d\left(X_{\alpha}\right)=\sup_{\alpha} \delta_{\alpha}^*d\left(\mathfrak{X}_{\alpha}\right).$ Therefore, to finish the proof it is enough to show that $\delta_1 d \left(\coprod_{\alpha} \mathfrak{X}_{\alpha}\right) =\sup_{\alpha} \delta_{\alpha}^*d\left(\mathfrak{X}_{\alpha}\right).$ 

Since $\mathfrak{X}_{\alpha}$ is a proximity subspace of $\coprod_{\alpha} \mathfrak{X}_{\alpha}$ for all $\alpha,$ Theorem $2$ in \cite{proximitydimension} implies that
\[\delta_1 d \left(\coprod_{\alpha} \mathfrak{X}_{\alpha}\right) \geq \sup_{\alpha} \delta_{\alpha}^*d\left(\mathfrak{X}_{\alpha}\right).\]
To see that $\delta_1 d \left(\coprod_{\alpha} \mathfrak{X}_{\alpha}\right) \leq \sup_{\alpha} \delta_{\alpha}^*d\left(\mathfrak{X}_{\alpha}\right),$ suppose that $\sup_{\alpha} \delta_{\alpha}^*d\left(\mathfrak{X}_{\alpha}\right)=d,$ since the inequality is clear when $\sup_{\alpha} \delta_{\alpha}^*d\left(\mathfrak{X}_{\alpha}\right)=\infty$. Take any $\delta_1$-covering $A^1,A^2,\dots ,A^k$ of $\left(\coprod_{\alpha} \mathfrak{X}_{\alpha}\right).$ Notice that for each $\alpha,$ the restriction of this covering to $\mathfrak{X}_{\alpha}$ is a $\delta_{\alpha}$-covering of $X_{\alpha},$ i.e., $A^1_{\alpha}, A^2_{\alpha}, \dots, A^k_{\alpha}$ is a $\delta_{\alpha}$-covering of $X_{\alpha}.$ Since $\delta_{\alpha}d (\mathfrak{X}_{\alpha})\leq d,$ by Lemma \ref{lemma14}, we know that there exists a finite refinement $A^1_{\alpha},...,A^k_{\alpha}$ (some of them possibly empty) that is a $\delta_{\alpha}$-covering of $\mathfrak{X}_{\alpha}$ of multiplicity at most $d+1.$ Then for all $i$ such that $1\leq i\leq k,$ define
\[A_i:=\bigcup_{\alpha}A^i_{\alpha}.\]
By construction, it is easy to see that $A_1, \cdots, A_k$ is a finite $\delta_1$-covering of $\coprod_{\alpha}\mathfrak{X}_{\alpha}$ of multiplicity at most $d+1.$ Thus, $\delta_1 d \left(\coprod_{\alpha} \mathfrak{X}_{\alpha}\right) \leq d =\sup_{\alpha} \delta_{\alpha}^*d\left(\mathfrak{X}_{\alpha}\right),$ finishing the proof that $\delta_1 d \left(\coprod_{\alpha} \mathfrak{X}_{\alpha}\right) = \sup_{\alpha} \delta_{\alpha}^*d\left(\mathfrak{X}_{\alpha}\right).$
\end{proof}

\begin{example}
Given a subset $A\subseteq \mathbb{R}^n,$ define
\[A_{\mathbb{Q}}:=A \cap \mathbb{Q}^n,\]
where $\mathbb{Q}$ denotes the rational numbers. For each positive integer $n,$ let 
\[X_n:=([0,1]^n)_{\mathbb{Q}}.\]
Equip each $X_n$ with the proximity $\delta_{n}$ corresponding to the compactification $[0,1]^n.$ Consider the coproduct $(\coprod_{n}X_{n}, \delta),$ i.e.,
 \[\coprod_{n}X_{n}=[0,1]_{\mathbb{Q}} \sqcup \left( [0,1] \times [0,1] \right)_{\mathbb{Q}} \sqcup \left( [0,1] \times [0,1] \times [0,1] \right)_{\mathbb{Q}} \sqcup \cdots\]
with the coproduct proximity $\delta.$
By theorem \ref{last_theorem}, the proximity dimension of the coproduct $\coprod_{\alpha}X_{\alpha}$ equals the supremum of the covering dimensions of the Smirnov compactifications of the original proximity spaces, i.e., the proximity dimension of the coproduct is $\infty.$ Consequently, the covering dimension of the Smirnov compactification of the coproduct is also $\infty.$ This happens despite the fact that each $X_{\alpha}$ has covering dimension 0. In fact, $\coprod_{n}X_{n}$ also has covering dimension $0.$ This follows from the fact that $\coprod_{n}X_{n}$ has countable cardinality and is metrizable (see for example $1.2.5$ in \cite{Engelking}). Thus, we have
\[\delta d(\coprod_{n}X_{n})= \infty \quad \quad \text{and} \quad \quad dim(\coprod_{n}X_{n})=0.\]
\end{example}

\bibliographystyle{abbrv}
\bibliography{coproducts_of_proximity_spaces}{}

\end{document}